\theoremstyle{plain}
\newtheorem{theorem}{Theorem}[section]
\newtheorem{lemma}[theorem]{Lemma}
\newtheorem{proposition}[theorem]{Proposition}
\newtheorem{corollary}[theorem]{Corollary}
\newtheorem*{question*}{Question}
\theoremstyle{definition}
\newtheorem*{definition*}{Definition}
\newtheorem{definition}[theorem]{Definition}
\newtheorem*{example*}{Example}
\newtheorem{example}[theorem]{Example}
\newtheorem*{observation*}{Observation}
\newtheorem*{Goal*}{Goal}
\newtheorem*{Assumption*}{Assumption}
\theoremstyle{remark}
\newtheorem*{remark*}{Remark}
\newtheorem{remark}[theorem]{Remark}
\numberwithin{equation}{section}
\newcommand{\ow}{\omega}
\newcommand{\lda}{\lambda}
\newcommand{\Lda}{\Lambda}
\newcommand{\p}{\partial}
\newcommand{\C}{{\mathbb{C}}}
\newcommand{\R}{{\mathbb{R}}}
\newcommand{\Q}{{\mathbb{Q}}}
\newcommand{\Z}{{\mathbb{Z}}}
\newcommand{\N}{{\mathbb{N}}}
\newcommand{\CP}{\C P}
\DeclareMathOperator{\Spec}{Spec}
\DeclareMathOperator{\CZ}{CZ}
\DeclareMathOperator{\CF}{CF}
\DeclareMathOperator{\SH}{SH}
\DeclareMathOperator{\HF}{HF}
\DeclareMathOperator{\Ho}{H}
\DeclareMathOperator{\QH}{QH}
\begin{document}

\title{Transfer maps in symplectic cohomology for convex symplectic domains}

\author{Myeonggi Kwon}
\address{Department of Mathematics Education, and Institute of Pure and Applied Mathematics, Jeonbuk National University, Jeonju 54896, Republic of Korea}
\email{mkwon@jbnu.ac.kr}

\author{Takahiro Oba}
\address{Department of Mathematics, The University of Osaka, Toyonaka, Osaka 560-0043, Japan}
\email{taka.oba@math.sci.osaka-u.ac.jp}

\begin{abstract}
We construct transfer maps in symplectic cohomology for convex symplectic domains under the assumption that the complement of a subdomain is exact. We manipulate the action filtration by Reeb periods introduced by McLean--Ritter for the construction.
\end{abstract}

\maketitle


\section{Introduction}

Transfer maps in Floer theory, introduced by Viterbo \cite{Vit}, have played a central role in fundamental questions of symplectic topology. For example, it serves as a quite nontrivial obstruction to the existence of certain Lagrangian submanifolds \cite{Vit}, and it is also an essential ingredient for constructing symplectic capacities from Floer theory, as e.g. in \cite{FHW, GH18}. 

Roughly speaking, for a symplectic manifold $V$ and a codimension zero embedding $W \hookrightarrow V$, transfer maps appear as natural homomorphisms between various flavors of Floer (co)homology of $V$ and $W$.
To our knowledge, it is only constructed when the ambient symplectic manifold $V$ is globally exact in the literature, and 
the purpose of this article is to give a construction of transfer maps with favorable functorial properties in symplectic cohomology for possibly non-exact symplectic manifolds. We do this when the complement of a subdomain is exact.



Let $V$ be a convex symplectic domain, that is, a compact symplectic manifold which is exact near the boundary $\p V$ and the Liouville vector field points outward along $\p V$. Under some additional assumptions depending on the context,
the symplectic cohomology $\SH^*(V)$ is defined as a $\Z$-graded algebra over the Novikov field $\Lda_{V}$. A standard construction of $\SH^*(V)$ is briefly reviewed in Section \ref{sec: symcohomology}; we refer the reader to \cite{BeRi20} for an intensive study of the symplectic cohomology of convex symplectic domains.

Let $W$ be a codimension zero subdomain of $V$ such that the symplectic form is exact in the complement $V \setminus W$; we call it a \emph{complement-exact} subdomain. When the boundary $\p W$ is simply-connected, the Novikov field $\Lda_W$ associated with $W$ can be canonically identified with the one $\Lda_V$ of the ambient domain $V$; see Theorem \ref{thm: nonnegandW}. Under this identification, the main result of this article is to establish an algebra homomorphism from $\SH^*(V)$ to $\SH^*(W)$ that has a functorial property with respect to the restriction map $i^*: \QH^*(V) \rightarrow \QH^*(W)$ on the quantum cohomology rings.




\begin{theorem}\label{thm: A}
Let $W$ be a complement-exact subdomain in a convex symplectic domain $V$. Assume that $c_1(V)|_{\pi_2(V)} = 0$ and $\pi_1(\p W) = 0$. There exists a $\Z$-graded algebra homomorphism
$$
\Phi: \SH^*(V; \Lda_V) \rightarrow \SH^*(W; \Lda_W)
$$
under a canonical identification $\Lda_V = \Lda_W$. Moreover, it fits the following commutative diagram with quantum cohomology rings.
\[\begin{tikzcd}
        \SH^*(V) \arrow{r}{\Phi}  & \SH^*(W)  \\
        \QH^*(V) \arrow{r}{i^*} \arrow{u}{c^*}& \QH^*(W) \arrow[swap]{u}{c^*}
    \end{tikzcd}
\]
where the vertical maps are the canonical $c^*$-map. 
\end{theorem}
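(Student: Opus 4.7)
The strategy is to adapt Viterbo's construction of the transfer map to the non-exact setting by replacing the usual action filtration with the Reeb-period filtration of McLean--Ritter. I will use a cofinal family $\{H_\tau\}_\tau$ of Viterbo-type ``step'' Hamiltonians on $V$: $C^2$-small on the interior of $W$, with a large slope $\tau$ on a collar of $\p W$, approximately constant on the bulk of $V \setminus W$, and a second slope on a collar of $\p V$ chosen so that the family is cofinal for $\SH^*(V)$. The 1-periodic orbits of $H_\tau$ split into \emph{inner} orbits located near $\p W$ and \emph{outer} orbits located near $\p V$; each family is in bijection with Reeb orbits of bounded period on the corresponding contact hypersurface. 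As $\tau \to \infty$, the inner subfamily is cofinal for $\SH^*(W)$.

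The transfer $\Phi$ will then be realized as a chain-level projection. The key analytic input is an integrated maximum principle of Abouzaid--Seidel type, which I would apply on the exact region $V \setminus W$ to force Floer cylinders (and continuation cylinders between members of $\{H_\tau\}$) with inner asymptotes to remain in a collar neighborhood of $W$; here the complement-exactness of $V \setminus W$ is essential because it provides the primitive needed for the maximum principle. Granted this localization, the inner orbits span a direct summand of $\mathrm{SC}^*(V)$ that constitutes a quotient complex with respect to the Reeb-period filtration of McLean--Ritter, canonically isomorphic to $\mathrm{SC}^*(W)$, and the quotient projection is the chain map $\Phi$. The canonical identification $\Lda_V = \Lda_W$ uses the hypotheses $\pi_1(\p W) = 0$ and $c_1(V)|_{\pi_2(V)} = 0$ through Theorem~\ref{thm: nonnegandW}: a generic sphere in $V$ decomposes along $\p W$ into a sphere in $W$ and caps in the exact complement that contribute zero $\omega$-period, so the images of $\pi_2(W)$ and $\pi_2(V)$ under the period-and-Chern map agree.

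Upgrading $\Phi$ to an algebra homomorphism means repeating the argument for the pair-of-pants product: the same maximum-principle and Reeb-period analysis must localize triangle Floer solutions with three inner asymptotes into a neighborhood of $W$. For the commutativity with the $c^*$-map I would use its PSS realization by spiked half-cylinders; those with an inner outgoing asymptote should again be localized into a neighborhood of $W$ by the maximum principle, so that after a neck-stretching or homotopy-of-data argument they compute $c^*_W \circ i^*$. I expect the main obstacle to be arranging this localization uniformly and simultaneously for the continuation, pair-of-pants, and PSS moduli: this requires a delicate interplay between the two slope parameters of $H_\tau$ and the almost complex structures near $\p W$ and $\p V$ so that the Reeb-period filtration is preserved throughout, together with control of sphere bubbling, which the monotonicity condition $c_1(V)|_{\pi_2(V)} = 0$ and the Novikov coefficients should render manageable.
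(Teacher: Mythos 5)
Your overall strategy matches the paper's: step (``transfer-admissible'') Hamiltonians adapted to the embedding $W \hookrightarrow V$, the McLean--Ritter Reeb-period action filtration $\mathcal{F}$ (a modified functional that vanishes in the non-exact region), the Abouzaid--Seidel maximum principle to confine Floer solutions with inner asymptotes inside $\widehat W$, and the Novikov-field identification via capping across $\p W$ using $\pi_1(\p W)=0$ and complement-exactness. The paper realizes the $c^*$-square more simply than your PSS proposal (by taking a transfer-admissible Hamiltonian with slope $\kappa$ so small that both Floer complexes reduce to Morse complexes computing $\QH^*$), but both routes should work.

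The genuine gap is in the ring-homomorphism step. You write that it suffices to ``localize triangle Floer solutions with three inner asymptotes into a neighborhood of $W$,'' but that is not the issue. For $\Phi$ to be an algebra map, the chain-level quotient $\CF^*(H) \to \CF^*_{\leq 0}(H)$ must be a ring map, i.e.\ the positive-action part must behave like an ideal: a pair-of-pants solution with one inner (non-positive-action) input and one outer (positive-action) input must have a positive-action output. The maximum principle gives nothing here because the outer input forces the curve to leave a neighborhood of $W$. Nor does the naive filtration inequality $\mathcal{F}(x_1)+\mathcal{F}(x_2)\leq\mathcal{F}(x_3)$ settle it: if $\mathcal{F}(x_1)>0$ but $\mathcal{F}(x_2)\leq 0$ with $|\mathcal{F}(x_2)|$ large, the sum could be negative. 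The paper closes this in Section~\ref{sec: ring on nonneg} by a quantitative comparison that crucially depends on the specific cofinal family chosen in Section~\ref{sec: cofHam} (in particular $A=6\kappa/\mu$): one shows $\mathcal{F}(x_2)\geq -\kappa$ for inner orbits while $\mathcal{F}(x_1)$ exceeds $5\kappa$ (region III), $\approx\kappa(A-1)$ (region IV), or $>\tfrac12\kappa(A+1)$ (region V), so the sum is always positive. Without pinning down such parameter-dependent lower bounds on outer actions against the (a priori unbounded below) inner actions, your claim that $\Phi$ respects the product is unsupported; the ``delicate interplay between slope parameters'' you gesture at is exactly this estimate and needs to be made explicit.

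A smaller imprecision: you call the span of inner orbits a ``direct summand'' of the Floer cochain complex, but the filtration only exhibits it as a quotient complex (the positive-action orbits form the subcomplex); at the chain level there is no canonical splitting, and the differential may well map inner generators to outer ones.
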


The $c^*$-map above is induced by a natural inclusion involved in the definition of symplectic cohomology; see \cite[Section 4.1]{BeRi20} and \cite[Section 5]{Rit}.

For construction, we basically follow McLean \cite[Section 10.2]{Mclean_extension} and Ritter \cite[Section 9]{Rit} where transfer maps are constructed in symplectic (co)homology for exact domains. An essential ingredient is to use a special type of admissible Hamiltonians which is adapted to the embedding $W \hookrightarrow V$, see Figure \ref{fig: tranfer_Ham}, and also properly interacts with an action filtration on the symplectic (co)homology. In particular, a careful estimate of the action values of the generators of the corresponding Hamiltonian Floer cochain complex shows that the part $\SH^*_{\leq 0}(V)$ whose generators have non-positive action is canonically isomorphic to the symplectic cohomology $\SH^*(W)$ of the subdomain $W$. Then the natural quotient map $\SH^*(V) \rightarrow \SH^*_{\leq 0}(V)$ modding out by the generators with positive action produces the desired homomorphism $\SH^*(V) \rightarrow \SH^*(W)$. This was shown to be a ring homomorphism \cite[Section 10.2]{Mclean_extension} and more generally compatible with TQFT operations \cite[Section 9]{Rit} for exact domains.

The main technical point of the current non-exact setup is that the standard action functional, as in Remark \ref{rem: usualactionfctl}, now depends not only on loops but on their capping disks, and this makes the action value estimates delicate. To handle this, we introduce a new action functional, inspired by the work of McLean--Ritter \cite{McRi23},
which is designed to deal with non-exact convex symplectic domains. The basic idea is to impose action values to be zero in the non-exact region so that only the exact part contributes to the action values, and moreover, they can be effectively understood by Reeb periods of the contact boundary.

In Section \ref{sec: newfiltration}, we define an action functional $\mathcal{F}$ and give the relevant action value estimates in Lemma \ref{lem: actionorbits}. In particular, we show that the non-positive part $\SH^*_{\leq 0}(V)$ with respect to the action filtration induced by $\mathcal{F}$ again corresponds to the symplectic cohomology $\SH^*(W)$ of the subdomain $W$; see Theorem \ref{thm: nonnegandW}. It is also shown in Corollary \ref{cor: algebraisom} that the correspondence is compatible with the respective ring structure. The commutative diagram in Theorem \ref{thm: A} then follows from a standard argument.

There are various interesting examples of convex symplectic domains with complement-exact subdomains for potential applications. We consider negative line bundles in Example \ref{ex: neglinebdl2} and resolutions of isolated singularities in Example \ref{ex: resolution}. For a given convex domain $W$, attaching an exact cobordism along the boundary produces a larger convex domain $V$, and $W$ is complement-exact in $V$; See Remark \ref{rem: Weinattach}.

\section{Symplectic cohomology for convex symplectic domains}

\subsection{Convex symplectic domains} 
We recall the notion of convex symplectic domains following \cite{BeRi20}. 

Let $(V, \ow)$ be a compact symplectic manifold with boundary $\p V$ and $\lda$ a $1$-form defined in a neighborhood of $\p V$ such that $\ow = d \lda$; in particular $\ow$ is exact near $\p V$. The triple $(V, \ow, \lda)$ is called a \emph{convex symplectic domain} if the Liouville vector field $X$, defined by $\iota_X \ow = \lda$, is pointing outward along $\p V$. Liouville domains are examples of convex symplectic domains, but convex symplectic domains are not necessarily exact (nor symplectically aspherical) in the interior. Note that the restriction $\alpha : =  \lda|_{\p V}$ defines a contact structure $\xi : = \ker \alpha$ on $\p V$, and the domain $V$ forms a (strong) symplectic filling of the contact manifold $(\p V, \xi)$.

\begin{example}\label{ex: neglinebdl}
A class of examples of convex symplectic domains, which are not exact, can be obtained from negative line bundles over a closed integral closed symplectic manifold. More precisely, Let $(B, \ow)$ be an integral symplectic manifold, and 
let $\pi: E \rightarrow B$ the associated line bundle with $c_1(\pi) = - [\ow]$. Then the total space of a disk bundle $\pi: D(E) \rightarrow B$ serves as a convex symplectic domain which is not exact. Another related interesting examples, coming from the notion of \emph{Lefschetz--Bott fibrations}, can be found in \cite{Oba22}.



\end{example}



Gluing the symplectization $[1, \infty) \times \p V$ to $V$ along the boundary using the Liouville flow, we can complete $(V, \ow, \lda)$ as follows.
$$
\widehat V : = V \cup_{\p V} ([1, \infty) \times \p V), \quad \widehat \ow : = \ow \cup d(r \alpha), \quad \widehat \lda = \lda \cup r\alpha
$$
where $r \in [1, \infty)$. The open symplectic manifold $(\widehat V, \widehat \ow, \widehat \lda)$ is called the \emph{completion} of $V$. 

\begin{remark}
More generally, we define a \emph{convex symplectic manifold} $(M, \ow, \lda)$ to be an open symplectic manifold $(M, \ow)$ such that 
\begin{itemize}
\item there exists an exhausting function $h: M \rightarrow \R$;
\item $\lda$ is a 1-form defined on $\{h(z) \geq 1\} \subset M$ such that $\ow = d\lda$ and $\lda(X_h) > 0$ where $X_h$ is the Hamiltonian vector field of $h$. (This in particular implies that the associated Liouville vector field points outward along a level set of $h$.)
\end{itemize}
In particular, the completion $(\widehat V, \widehat \ow, \widehat \lda)$ of a convex symplectic domain $V$ is a convex symplectic manifold with respect to a monotone increasing function $h: \widehat V \rightarrow \R$ such that $h(z) = r$ where $r \in [1, \infty)$ is the Liouville coordinate of $z \in [1, \infty) \times \p V$. The domain $V$ appears as the sublevel set $\{h \leq 1\}$.
See \cite{BeRi20} for more details.
\end{remark}

\subsection{Complement-exact subdomains} \label{sec: comp-exact} A compact submanifold $W \subset V$ with boundary $\p W$ of codimension $0$ is called a \emph{convex symplectic subdomain}, or shortly a \emph{subdomain}, if $(W, \ow|_{W})$ forms a convex symplectic domain. In this paper, we are interested in subdomains such that the symplectic form $\ow$ of $V$ is exact in the complement $V \setminus W$. In this case, we say that the subdomain $W$ is \emph{complement-exact}, and the Liouville vector field is then well-defined in the complement. 

\begin{example}\label{ex: neglinebdl2}
Following Example \ref{ex: neglinebdl}, the total space $E$ can be seen as a (completed) convex symplectic manifold, and the total space of a disk bundle $\pi: D(E) \rightarrow B$ now serves as a complement-exact subdomain. Assume that
\begin{itemize} 
\item $[\ow] = c_1(B)$ and $[\ow]$ is primitive in $\Ho_2(B; \Z);$
\item $B$ is simply-connected.
\end{itemize}
Then $\pi_1(S(E)) = 0$ and $c_1(E) =0$ as in Theorem \ref{thm: A}, where $S(E)$ is the corresponding circle bundle. 

For a more concrete example, one takes the blow-up $B : = \CP^2 \# k \overline{\CP}^2$ of $\CP^2$ at $k$ generic points, with $0 < k < 9$. This admits a symplectic form $\ow_k$ whose Poincar\'e dual is given by $3H - E_1 - \cdots -E_k$ where $H$ is the class of a line in $\CP^2$ and $E_i$ is the exceptional curve derived from $i$-th blow-up. In particular $[\ow_k] = c_1(B)$ is primitive in $H_2(B;\Z)$ and $B$ is simply-connected.  
\end{example}

\begin{example}\label{ex: handle}
    Let $W$ be a convex symplectic domain. Attaching an exact symplectic cobordism $X$ to $W$ along the boundary $\p W$ produces another convex symplectic domain $V = W \cup_{\p W} X$, and $W$ is a complement-exact subdomain of $V$. Notable examples are those obtained by the Weinstein handle attachment. See Remark \ref{rem: Weinattach}
\end{example}

\begin{example}\label{ex: resolution}
    Let $X$ be a $\Q$-factorial variety with a unique singularity at the origin $O \in X$ and assume that the complement $X \setminus \{O\}$ admits an exact K\"ahler form $\ow_X$. Then by \cite[Lemma 3.2]{McRi23}, any resolution $\pi: Y \rightarrow X$ admits a K\"ahler form $\ow_Y$ such that $\pi^* \ow_X = \ow_Y$. In particular, $Y$ is a convex symplectic domain, and a neighborhood of $\pi^{-1}(\{O\}) \subset Y$ serves as a complement-exact subdomain.
\end{example}


\subsection{Symplectic cohomology} \label{sec: symcohomology} In this section, we briefly give a construction of symplectic cohomology for convex symplectic domains. We refer the reader to \cite{BeRi20, HoSa95, Ri14} for more details.

\subsubsection{Admissible Hamiltonians} \label{sec: admHam} Let $(V, \ow, \lda)$ be a convex symplectic domain. We assume that the first Chern class $c_1(V)$ vanishes on the second homotopy group $\pi_2(V)$ 
A Hamiltonian $H: \widehat V \rightarrow \R$ is called \emph{admissible} if 
\begin{itemize}
\item there exists $r_0 \in [1, \infty)$ such that $H$ depends only on the radial coordinate $r$ for $r \geq r_0$, say $H(z) = h(r)$, and $h'(r) \geq 0$;
\item $h(r) = \kappa r + b$ for sufficiently large $r$ where the \emph{slope} $\kappa$ is not the period of the Reeb orbit on the contact boundary $(\p V, \alpha)$.
\end{itemize}


\begin{remark}\label{rem: whyc_1zero}  \
\begin{enumerate}
    \item The condition $c_1(V)|_{\pi_2(V)} = 0$ is to equip the symplectic cohomology with a $\Z$-grading by the Conley--Zehnder index. More generally, one can also work with the so-called \emph{weakly monotone} symplectic manifolds; see \cite{HoSa95}.
    \item As fairly standard in Floer theory \cite{FHS, SaZe92}, we need to use generic time-dependent perturbations of admissible Hamiltonians (and of admissible almost complex structures as well) so that 1-periodic Hamiltonian orbits are nondegenerate and relevant moduli spaces of Floer solutions are smooth. For simplicity, we still denote time-dependent perturbations of admissible Hamiltonians by $H$ throughout this paper.
\end{enumerate}

\end{remark}

\subsubsection{Novikov field} Define a group
$$
\Gamma_V = \pi_2(V) / \sim
$$
where the equivalent relation $\sim$ is given by
$$
\gamma_1 \sim \gamma_2 \Leftrightarrow \int_{S^2} \gamma_1^* \ow  = \int_{S^2} \gamma_2^* \ow.
$$
The \emph{Novikov field} $\Lda_V$ associated with $V$ is defined by
$$
\Lda_V = \left\{\sum_{j=0}^{\infty} n_j \gamma_j \;|\;  n_j \in \Z_2, \; \gamma_j \in \Gamma_V, \; \lim_{j \rightarrow \infty} \ow(\gamma_j) =  \infty  \right\}
$$
By \cite[Theorem 4.1]{HoSa95}, $\Lda_V$ is a field of characteristic two.

\subsubsection{Cochain complex} Consider a pair $(x, v)$ of a contractible free loop $x: S^1 \rightarrow V$ and a smooth capping disk $v: D^2 \rightarrow V$ of $x$. Define an equivalence relation $\sim$ on the set of such pairs by
$$
(x_1, v_1) \sim (x_2, v_2) \Leftrightarrow x_1 = x_2 \;  \text{and $\int_{S^2} (v_1 \# \overline{v_2})^* \ow = 0$ where $v_1 \# \overline{v_2}$ is the glued sphere.}
$$
As noted in \cite[Section 5]{HoSa95}, the set of equivalence classes, still denoted by $(x, v)$, serves as the covering space $\widetilde{\mathcal{L}_0 V}$ of the space of free contractible loops  $\mathcal{L}_0 V$. 
\begin{remark}\label{rem: usualactionfctl}
For an admissible Hamiltonian $H$, a standard action functional $\mathcal{A}_H : \widetilde{\mathcal{L}_0 V} \rightarrow \R$ is given by
$$
\mathcal{A}_H(x, v) =  -\int_{D^2} v^*\ow + \int_{S^1} H(x(t)) dt.
$$
The class $(x, v)$ is a critical point of $\mathcal{A}_H$ if $x$ is a $1$-periodic Hamiltonian orbit of $H$. Here, we use the convention $\ow(\cdot, X_H)  = d H$ for the definition of the Hamiltonian vector field $X_H$.
\end{remark}

With each pair $(x, v) \in \widetilde{\mathcal{L}_0 V}$, we can associate the Conley--Zehnder index $\CZ(x, v) \in \Z$ as in \cite[Section 5]{HoSa95}; see also Remark \ref{rem: whyc_1zero}. Since we have assumed that $c_1(V)|_{\pi_2(V)} =0$, the index $\CZ(x, v)$ actually does not depend on the choice of capping disks $v$; we abbreviate it to $\CZ(x)$. For each $k \in \Z$, denote by $\widetilde{\mathcal{P}}_k(H)$ the set of pairs $(x, v)$ with $n- \CZ(x) = k$. We define a chain group $\CF^*(H)$ by
$$
\CF^k(H) = \left\{ \sum_{j=0}^{\infty} n_jc_j \;|\;  n_j \in \Z_2, \; c_j \in \widetilde{\mathcal{P}}_k(H),\; \lim_{j \rightarrow \infty}\mathcal{A}_H(c_j) \rightarrow  -\infty \right\}
$$
which is a vector space over $\Z_2$ generated by $\widetilde{\mathcal{P}}_k(H)$.
As discussed in \cite[Section 5]{HoSa95}, we can identify $\CF^k(H)$ with the vector space over the Novikov field $\Lda_V$ given by
$$
\CF^k(H) = \bigoplus_{x \in \mathcal{P}_k(H)} \Lda_V \langle x \rangle
$$
where $\mathcal{P}_k(H)$ denotes the set of 1-orbits $x$ with $n- \CZ(x) =  k$. Note that $\CF^k(H)$ is finite dimensional over $\Lda_V$, whereas it is infinite dimensional over $\Z_2$.

Take an \emph{admissible} time-dependent almost complex structure $J = J_t$ on $\widehat V$, which means that $J$ is compatible with $\widehat \ow$ and is cylindrical at the end, i.e. $J^*\widehat \lda = dr$. For two generators $(x_{\pm}, v_{\pm}) \in \widetilde{\mathcal{P}}_*(H)$, consider the moduli space $\mathcal{M}((x_-, v_-) , (x_+, v_+); H, J)$ consisting of Floer solutions $u: \R \times S^1 \rightarrow \widehat V$ of the Floer equation 
\[
\p_s u + J_t (\p_t u - X_H(u)) = 0
\]
such that $[x_+, v_+]  = [x_+, v_- \#u] \in \widetilde{\mathcal{L}_0 V}$ up to the $\R$-shift, i.e. $u \sim u(\cdot + \text{const}, \cdot)$. As in \cite{FHS, SaZe92}, for generic $J$, the moduli space $\mathcal{M}((x_-, v_-) , (x_+, v_+); H, J)$ is a smooth manifold of dimension $\CZ(x_+)  - \CZ(x_-) - 1$. We define the differential map $d : \CF^*(H) \rightarrow \CF^{*+1}(H)$
by counting the elements in $\mathcal{M}((x_-, v_-) , (x_+, v_+); H, J)$ with $\CZ(x_+)  - \CZ(x_-) - 1 = 0$, that is,
\[
d(x_+, v_+) = \sum_{\CZ(x_-) = \CZ(x_+) - 1} (\#_{\Z_2} \mathcal{M}((x_-, v_-) , (x_+, v_+); H, J)) (x_-, v_-).
\]

\subsubsection{Symplectic cohomology} The cohomology group $\HF^*(H) = \HF^*(H, J; \Lda_V)$ of the cochain complex $(\CF^*(H), \p)$ is called the \emph{Hamiltonian Floer cohomology} of $H$. For two admissible Hamiltonians $H_{\pm}$ with $H_+ \leq H_-$, there is a canonical homomorphism, called a \emph{continuation map} $\HF^*(H_+) \rightarrow \HF^*(H_-)$; see \cite[Section 5]{HoSa95} and \cite[Section 2.8]{Ri14}. This gives rise to a direct system by increasing the slope $\kappa$ of Hamiltonians. The \emph{symplectic cohomology} $\SH^*(V) = \SH_*(V; \Lda)$ of the domain $V$ is defined as the direct limit
$$
\SH^*(V) = \SH^*(V; \Lda) = \varinjlim_{\kappa} \HF^*(H)
$$ 
which is a $\Z$-graded vector space over the Novikov field $\Lda_V$.



\subsubsection{Ring structure} The symplectic cohomology $\SH^*(V)$ admits a ring structure by the pair-of-pants product. 
Let $\mathcal{S}$ be the Riemann sphere with two positive punctures and one negative puncture; this means that $\mathcal{S}$ admits a parametrization $[0, \infty) \times S^1$ and $(-\infty, 0] \times S^1$, respectively, near the punctures. Given three admissible Hamiltonians $H_1, H_2, H_3$ on $\widehat V$, we take an $\mathcal{S}$-parametrized Hamiltonian $H_{\mathcal{S}}$ which coincides with $H_1, H_2$ near the positive punctures and with $H_3$ near the negative puncture. We likewise take an $\mathcal{S}$-parameterized admissible almost complex structure $J_{\mathcal{S}}$. We define a product
\[
\HF^k(H_1) \otimes \HF^{\ell}(H_2) \rightarrow \HF^{k+\ell}(H_3), \quad x_1 \otimes x_2 \mapsto x_3
\]
by counting the solutions $u: \mathcal{S} \rightarrow \widehat V$ of the Floer equation
\begin{equation} \label{eq: Floereq}
   (du - X_{H_{\mathcal{S}}} \otimes \beta)^{0, 1} = 0 
\end{equation} 
which converges to $x_1, x_2$ at the positive punctures and converges to $x_3$ at the negative puncture. Here, $\beta \in \Omega^1(\mathcal{S})$ is a one-form that agrees with $dt$ near the punctures (where $t$ denotes the $S^1$-coordinates of the parameterizations). As is well-known, the product is compatible with the continuation maps and hence induces a product on the symplectic cohomology as
\[
\SH^k(V) \otimes \SH^{\ell}(V) \rightarrow \SH^{k+\ell}(V).
\]
With this product, the symplectic cohomology $\SH^*(V)$ is now a unital $\Z$-graded algebra over the Novikov field $\Lda_V$. For more details on the construction of the ring structure, we refer the reader to \cite[Section 6]{Rit} and \cite[Section 4.7]{BeRi20}.





\section{Transfer maps}


\subsection{Transfer-admissible Hamiltonians}

Let $W$ be a complement-exact subdomain in a convex symplectic domain $(V, \ow, \lda$) i.e. the complement $\widehat{V} \setminus W$ is exact. Denote by $i: W \rightarrow \widehat V$ the obvious inclusion.

\begin{lemma}
The inclusion $i: W \rightarrow \widehat V$ extends to an embedding $i: \widehat W \rightarrow \widehat V$ with $i^* \hat \lda_V = \hat \lda_W$.
\end{lemma}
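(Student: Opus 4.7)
The plan is to extend $i$ by integrating the Liouville vector field of a suitable primitive of $\ow$ on the exterior $\widehat V \setminus W$. By the complement-exact hypothesis such a primitive $\mu$ exists, and we may take it to equal $r\alpha$ on the symplectization end $[1, \infty) \times \p V$ of $\widehat V$. The task is to adjust $\mu$ near $\p W$ so that its Liouville flow, starting from $\p W$, reproduces the symplectization end of $\widehat W$ inside $\widehat V \setminus W$.

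For the matching near $\p W$, I would observe that $(\p W, \alpha_W := \lda_W|_{\p W})$ is a contact-type hypersurface in $\widehat V$, so the symplectic collar neighborhood theorem furnishes an embedded tube $(1 - \eps, 1 + \eps) \times \p W \hookrightarrow \widehat V$ with $\ow = d(r \alpha_W)$, extending the standard Liouville collar of $\p W$ inside $W$. On the exterior half $[1, 1 + \eps) \times \p W$ of this tube, $\mu - r \alpha_W$ is closed; under the standing hypothesis $\pi_1(\p W) = 0$ of Theorem~\ref{thm: A} (so $H^1(\p W) = 0$) it is exact, say $df$. Multiplying $f$ by a cut-off supported near $\p W$ and subtracting the resulting exact $1$-form from $\mu$ yields a modified primitive, still denoted $\mu$, equal to $r\alpha_W$ on a smaller collar of $\p W$ in $\widehat V \setminus W$ and still equal to $r\alpha$ on the symplectization end of $\widehat V$. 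Its Liouville vector field $Y$, defined by $\iota_Y \ow = \mu$, is then the standard radial $r \p_r$ on both cylindrical ends.

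The positive flow $\phi_t$ of $Y$ is then complete on $\widehat V \setminus W$: on the two cylindrical ends it is the standard radial flow, and on the compact intermediate region $V \setminus W$ the expansion property $\phi_t^* \ow^n = e^{nt} \ow^n$ forces every trajectory starting at $\p W$ to exit into the symplectization end of $\widehat V$ in finite time. Setting $i|_W = \id$ and $i(r, p) := \phi_{\log r}(p)$ for $(r, p) \in [1, \infty) \times \p W$ glues to the desired embedding $i : \widehat W \hookrightarrow \widehat V$. Since $\iota_Y \mu = \ow(Y, Y) = 0$, the Cartan formula gives $\mathcal{L}_Y \mu = \mu$, whence $\phi_t^* \mu = e^t \mu$ and therefore $i^* \mu = r \alpha_W = \widehat \lda_W$ on the symplectization end of $\widehat W$. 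The main obstacle is the exactness step above: $\pi_1(\p W) = 0$ is precisely what makes the cut-off construction available; absent this hypothesis, one would instead have to choose $\mu$ carefully from the outset so that $\mu|_{\p W} = \alpha_W$ as $1$-forms on $\p W$, exploiting the cohomological flexibility of primitives on the exact manifold $\widehat V \setminus W$.
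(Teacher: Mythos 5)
Your proof follows the same strategy as the paper's — choose a primitive of $\ow$ on the exact complement, take its $\ow$-dual Liouville vector field, and flow outward from $\p W$ — but you treat carefully a step the paper glosses over: why the primitive can be arranged to restrict to $r\alpha_W$ on a collar of $\p W$. The paper simply writes ``$i^*\hat\lda_V = \lda_W$,'' yet an arbitrary primitive on the complement-plus-collar need only differ from $\lda_W$ by a \emph{closed} $1$-form, and whether that difference extends as a closed form (so the primitive can be corrected) is a cohomological question. You settle it via $H^1(\p W;\R)=0$, drawn from the standing hypothesis $\pi_1(\p W)=0$ of Theorem~\ref{thm: A}, and then cut off the exact correction; this fills a genuine gap. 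Note, though, that the lemma as printed does not carry $\pi_1(\p W)=0$, so strictly either that hypothesis or a compatible a priori choice of $\lda_W$ (the alternative you point out at the end) should be read as implicit. The rest of your argument — contact-type collar around $\p W$, $\mathcal{L}_Y\mu=\mu$ hence $\phi_t^*\mu = e^t\mu$, completeness of the flow — is correct and matches the paper; the volume-expansion remark is superfluous (and on its own not quite a proof that trajectories leave the compact region in finite time), since it already suffices that $Y$ points into $\widehat V\setminus \mathrm{int}\, W$ along $\p W$, is smooth on the compact intermediate region, and equals $r\p_r$ on the cylindrical end, which gives forward completeness directly.
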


\begin{proof}
Since the complement $\widehat V \setminus W$ is exact, the primitive 1-form $\hat \lda_V$ is well-defined on a region including the complement and a neighborhood of $\p W$ in $W$ where $\ow_W = d \lda_W$ is exact. Note also that $i^* \hat \lda_V = \lda_W$. Let $X$ be the vector field $\hat \ow_V$-dual to $\hat \lda_V$ i.e. $\iota_X \hat \ow_V = \hat \lda_V$. Denote its flow by $\phi_X^t$. Then we define an extension $i: \widehat W \rightarrow \widehat V $ of the inclusion $i: W \rightarrow \widehat V$ by
$$
i(r_W, y) = \phi_X^{\log r_W} (y)
$$
where $(r_W, y) \in [1, \infty) \times \p W$. It is straightforward to see that $i^* \hat \lda_V = \hat \lda_W$.
\end{proof}

In the sense of the above lemma, we identify the completion $\widehat W$ with the image $i(\widehat W) \subset \widehat V$. In particular, the symplectization $[1, \infty) \times \p W$ can now be seen as a subset of the completion $\widehat V$. In the following, we denote the cylindrical coordinates of $\widehat W$ and $\widehat V$ by $r_W$ and $r_V$, respectively, and conventionally write $W$ for the region $\{r_W \leq 1\}$ in $\widehat V$.


Following \cite[Section 10.2]{Mclean_extension}, we define a certain class of admissible Hamiltonians adapted to the embedding $i: \widehat{W} \hookrightarrow \widehat{V}$ as follows.

\begin{definition}\label{transadmham}
A Hamiltonian $H: \widehat V \rightarrow \R$ is called \emph{transfer-admissible} if 
\begin{itemize}
\item $H \leq 0$ on $\text{int} W$;
\item there exist constants $A > 1$ and $P \gg 1$ such that $\{r_W \leq 1\} \subset \{r_V \leq P\}$ and that $H$ is positively constant on the region $\{r_W \geq A\} \cap \{r_V \leq A+ 1 + P\}$;
\item $H$ depends only on $r = r_W$ and $r = r_V$, and $H$ is convex near $r_W = 1$ and $r_V = A+1+P$ whereas $H$ is concave near $r_W = A$;
\item For a sufficiently small $\epsilon > 0$, $H$ is linear for $1+ \epsilon/\kappa \leq r_W \leq A-\epsilon/\kappa$ of positive slope $\kappa \not \in \Spec(\p W, \alpha_W)$;
\item $H$ is linear at infinity with slope $\kappa/2 \not \in \Spec(\p V, \alpha_V)$. 
\end{itemize}
\end{definition}

See Figure \ref{fig: tranfer_Ham} for a conceptual description. Note that transfer-admissible Hamiltonians are admissible in the sense of Section \ref{sec: admHam}. 

\subsection{Filtration by Reeb periods} \label{sec: newfiltration}
We define an action filtration adapted to the embedding $\widehat{W} \hookrightarrow \widehat{V}$ inspired by the construction in \cite[Appendix D]{McRi23}. Let $\phi: \widehat V \rightarrow \R$ be a smooth cutoff function such that
\begin{itemize}
\item $\phi$ is monotone increasing on $r_V$;
\item $\phi \equiv 0$ for $r_W \leq 1$;
\item $\phi' \equiv 1$ near $r_W = A$;
\item $\phi$ is constant in the region $\{r_W > A+\epsilon/\kappa\} \cap \{r_V < A + 1 + P - \epsilon/\kappa\}$;
\item $\phi' \equiv 1$ near $r_V = A +1 + P$;
\item $\phi$ is constant at the end.  
\end{itemize} 
\begin{figure}[t]
    \centering
    \begin{overpic}[width=0.5\linewidth]{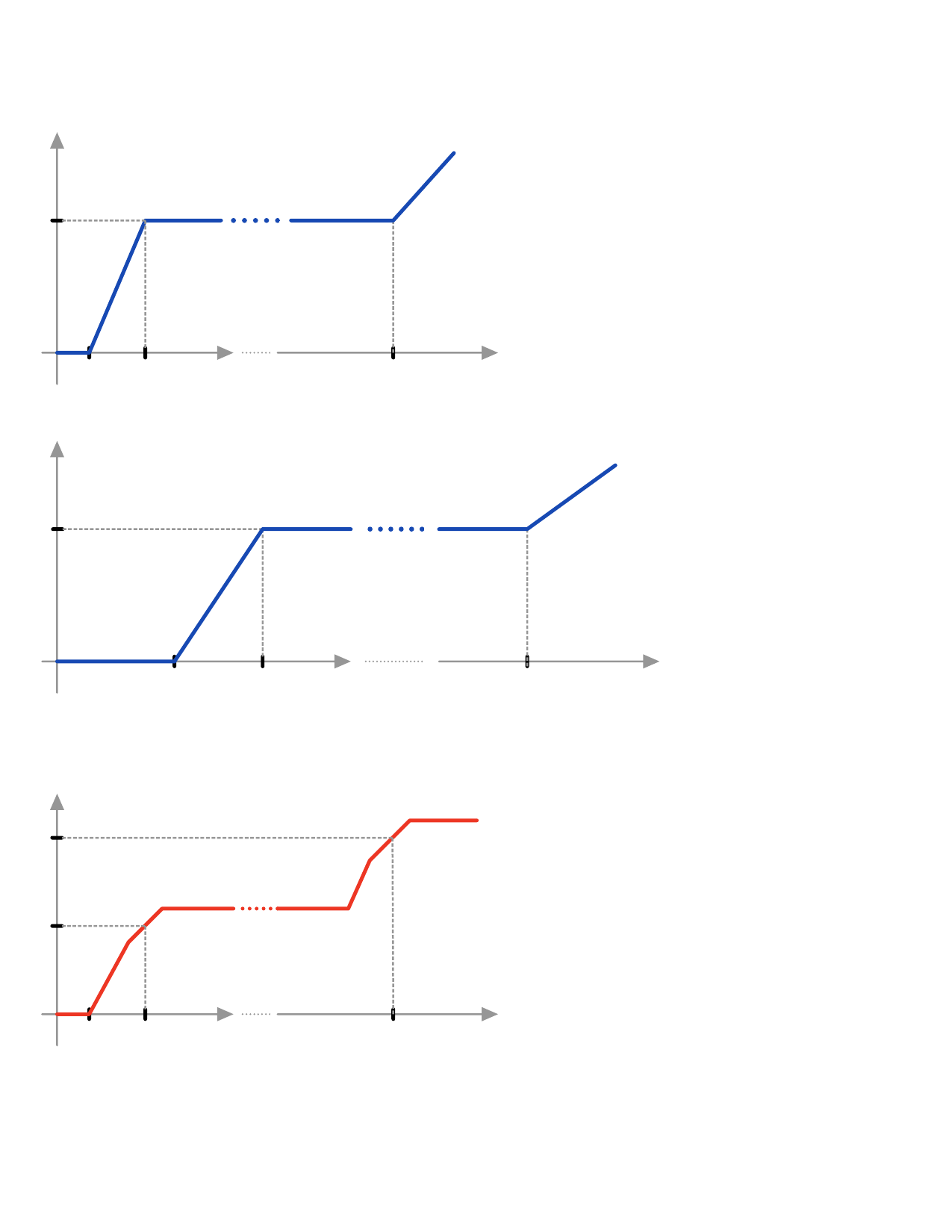}
    \put(85, 3){$r_W$}
    \put(-3, 77){$B$}
    \put(24, 3){$1$}
    \put(48, 3){$A$}
    \put(30, 50){$\kappa$} 
    \put(210, 3){$r_V$}
    \put(145, 3){$A+1+P$}
    \put(170, 100){$\frac{1}{2}\kappa$}
    \end{overpic}
    \caption{Transfer-admissible Hamiltonians}
    \label{fig: tranfer_Ham}
    \vspace{12pt}
    \centering
   \begin{overpic}[width=0.5\linewidth]{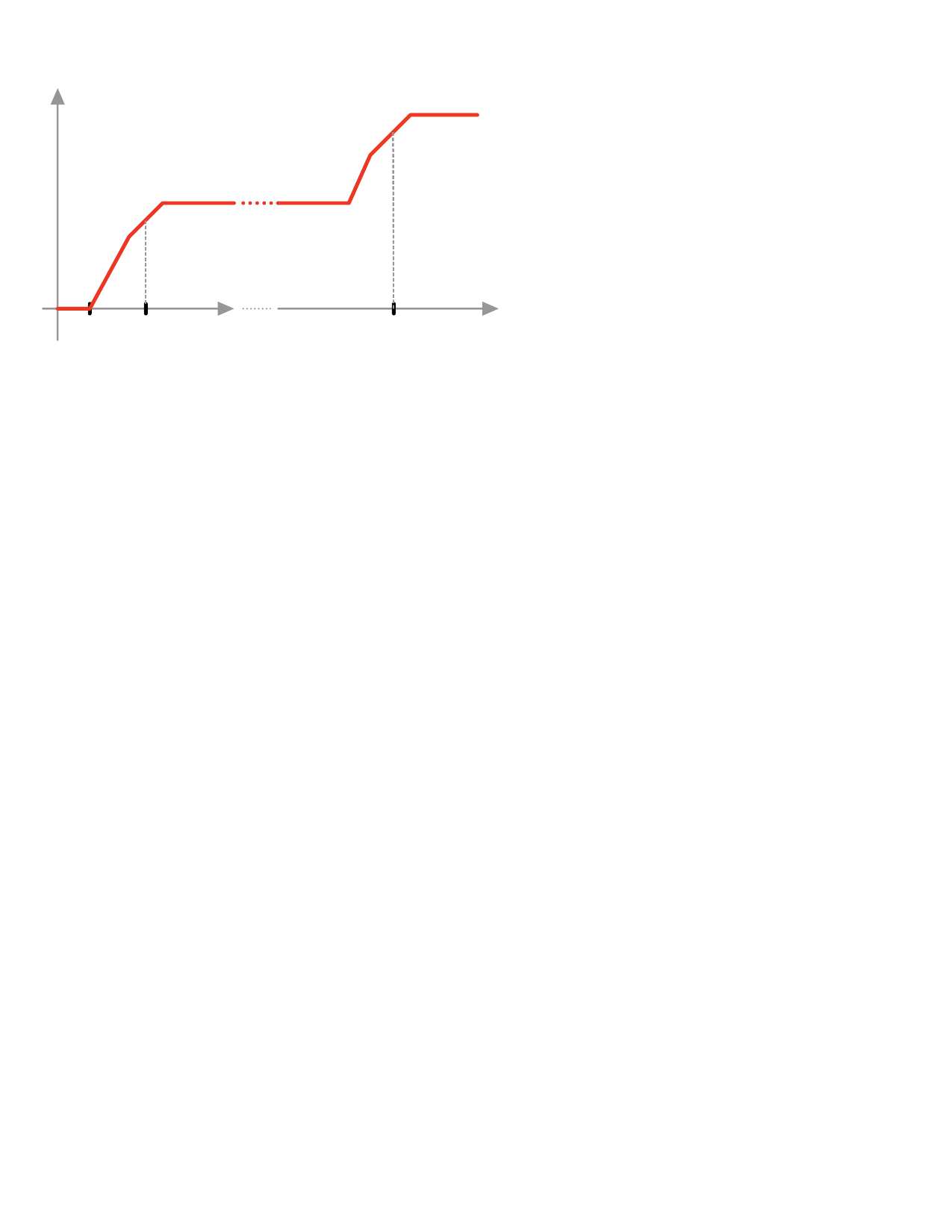}
    \put(90, 3){$r_W$}
    \put(24, 3){$1$}
    \put(48, 3){$A$}
    \put(43, 62){$1$}    
    \put(215, 3){$r_V$}
    \put(145, 3){$A+1+P$}
    \put(162, 104){$1$}
   \end{overpic}
    \caption{Cutoff function $\phi$}
    \label{fig: cutoff}
\end{figure}
See Figure \ref{fig: cutoff}. Define a 1-form $\theta$ on $\widehat V$ by
$$
\theta = \begin{cases} \phi(r_W) \alpha_W & r_W \leq A; \\
\phi(r_V)\alpha_V & r_V \geq A +1+P; \\
\hat \lda_V & \text{otherwise.}  \end{cases}
$$
Here, the constants $A$ and $P$ are the ones appear in the definition of transfer-admissible Hamiltonians in Definition \ref{transadmham}. Define a function $f: \widehat V \rightarrow \R$ up to smoothing by
$$
f(z) =  \begin{cases}  \int_{0}^{r_W} \phi'(\tau) H'(\tau) d\tau & z \in \{r_W \leq A \}; \\
 \int_{A+1+P}^{r_V} \phi'(\tau) H'(\tau) d \tau + f(A) & z \in \{r_V \geq A+ 1+ P\}; \\ 
f(A) & \text{otherwise.}
\end{cases}
$$
Since $H' \equiv 0$ in the region $\{r_W > A\} \cap \{r_V < A+ 1 + P\}$, the constant value in the definition of $f$ is canonically determined. Now we define an \emph{action functional} $\mathcal{F}: \mathcal{L}\widehat V \rightarrow \R$ on the loop space $\mathcal{L}\widehat V$ by
$$
\mathcal{F}(x) = -\int_{S^1} x^* \theta + \int_{S^1} f(x(t)) dt.
$$

\begin{remark} \label{rem: exactandsignconv} 
When $V$ is globally exact, we can simply take the cut-off function $\phi$ to be the radial function $\phi(r) = r$ of the Liouville coordinate $r$ in the symplectization part. The 1-form $\theta$ and the associated function $f$ then agree with the Liouville form $\hat \lda_V$ and the Hamiltonian function $H$, respectively. Hence, the action functional $\mathcal{F}$ is nothing but the usual one for the exact case.
\end{remark}

To show that $\mathcal{F}$ defines an action filtration on the cochain complex $\CF^*(H)$ of a transfer-admissible Hamiltonian $H$, we need to check that the action value increases along Floer trajectories:

\begin{proposition}
Let $u: \R \times S^1 \rightarrow \widehat V$ be a Floer trajectory from a 1-orbit $x_-$ to $x_+$. Then we have
$$
\mathcal{F}(x_+) - \mathcal{F}(x_-) \leq 0.
$$ 
\end{proposition}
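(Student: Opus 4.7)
The plan is to run the classical Floer-theoretic monotonicity argument adapted to the non-standard action functional $\mathcal{F}$: differentiate $\mathcal{F}(u(s, \cdot))$ in the parameter $s$, show the derivative is pointwise non-positive, and integrate over $s \in \R$. Starting from $\mathcal{F}(u(s, \cdot)) = -\int_{S^1} u(s, \cdot)^* \theta + \int_{S^1} f(u(s,t))\, dt$, I would apply Cartan's formula $\mathcal{L}_{\p_s u}\theta = \iota_{\p_s u} d\theta + d(\iota_{\p_s u}\theta)$ (the exact piece integrates to zero over $S^1$) and then substitute the Floer equation $\p_t u = X_H(u) + J_t \p_s u$ to obtain
\[
\frac{d}{ds}\mathcal{F}(u(s, \cdot)) = -\int_{S^1} d\theta(\p_s u, X_H)\, dt - \int_{S^1} d\theta(\p_s u, J_t \p_s u)\, dt + \int_{S^1} df(\p_s u)\, dt.
\]

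The heart of the argument is the pointwise identity $d\theta(\cdot, X_H) = df$ on $\widehat V$, which I would verify region by region using the explicit form of $\theta$ and $f$ from Section~\ref{sec: newfiltration}. In the interior of $W$, both sides vanish since $\theta \equiv 0$ and $f \equiv 0$; in the barrel $\{r_W \geq A\} \cap \{r_V \leq A + 1 + P\}$, both $H$ and $f$ are constant so both sides vanish trivially. In the collar $1 \leq r_W \leq A$, $H$ depends only on $r_W$ so $X_H = H'(r_W)\, R_W$ for $R_W$ the Reeb field of $\alpha_W$, and $d\theta = \phi'(r_W)\, dr_W \wedge \alpha_W + \phi(r_W)\, d\alpha_W$; using $\alpha_W(R_W) = 1$ and $\iota_{R_W} d\alpha_W = 0$, a direct computation gives $d\theta(\cdot, X_H) = \phi'(r_W) H'(r_W)\, dr_W$, which matches $df$ from its defining integral. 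The end $r_V \geq A + 1 + P$ is handled identically. Granting this identity, the first and third terms cancel, and the derivative reduces to $-\int d\theta(\p_s u, J_t \p_s u)\, dt$.

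It remains to verify the pointwise non-negativity $d\theta(v, J_t v) \geq 0$. In the middle region where $d\theta = \widehat \ow_V$, this is the standard inequality $\widehat \ow_V(v, J_t v) \geq 0$ from compatibility of $J_t$. In the collar, I would decompose $v = a\, \p_{r_W} + b\, R_W + w$ with $w \in \ker \alpha_W$ and use the cylindrical condition $J_t \p_{r_W} = R_W$ together with compatibility of $J_t$ on the contact hyperplane to obtain
\[
d\theta(v, J_t v) = \phi'(r_W)\,(a^2 + b^2) + \phi(r_W)\, d\alpha_W(w, J_t w) \geq 0,
\]
the non-negativity coming from $\phi \geq 0$, $\phi' \geq 0$, and $d\alpha_W(w, J_t w) \geq 0$; the end region is analogous. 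Integrating $\tfrac{d}{ds}\mathcal{F}(u(s, \cdot)) \leq 0$ over $s \in \R$ then yields $\mathcal{F}(x_+) - \mathcal{F}(x_-) \leq 0$. The main technical care is concentrated at the transitions $r_W = A$ and $r_V = A + 1 + P$, where $\theta$ is patched between the formulas $\phi(r_W)\alpha_W$, $\widehat \lda_V$, and $\phi(r_V)\alpha_V$; the cutoff $\phi$ and the function $f$ were set up in Section~\ref{sec: newfiltration} precisely so that the identity $d\theta(\cdot, X_H) = df$ continues to hold across the smoothings, which is what makes the cancellation in the second step actually work.
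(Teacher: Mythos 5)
Your argument is essentially the paper's proof in differentiated form: the paper integrates $\eta(\p_s u, J\p_s u)\geq 0$ over the cylinder and applies Stokes, which is the same computation as differentiating $\mathcal{F}(u(s,\cdot))$ in $s$ and integrating, and both hinge on the same two facts, the identity $\iota_{X_H}d\theta = -df$ (equivalently $d\theta(\cdot,X_H)=df$) and the pointwise non-negativity of $d\theta(\cdot,J\cdot)$. The one thing worth flagging is that your decomposition for the positivity step tacitly assumes $J$ is cylindrical with respect to $r_W$ throughout the collar $1\leq r_W\leq A$, which is stronger than the stated admissibility condition (cylindrical only at the end of $\widehat V$); the paper sidesteps this by citing \cite[Lemma 6.1]{McRi23}, but either way one should note that the almost complex structures are being restricted to a class adapted to the collar for the inequality to hold.
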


\begin{proof}
Let $\eta = d \theta$. Note that 
$$
df = \begin{cases}
\phi'H'dr_{W} & \{r_W \leq A\}; \\
\phi'H' dr_V & \{r_V \geq A+1+ P\}; \\
0 & \text{otherwise}.
\end{cases}
$$
It follows that $\iota_{X_H} \eta = df$. From the same computation as in \cite[Lemma 6.1]{McRi23}, we know that $\eta(\p_s u, J \p_s u) \geq 0$ for each Floer trajectory $u$. 
We now compute, taking into account the Floer equation for $u$, that
$$
0 \leq \int_{\R \times S^1} \eta(\p_s u, J \p_s u) ds \wedge dt =  \int_{\R \times S^1} u^*\eta - u^*(\iota_{X_H}\eta) \wedge dt = \int_{\R \times S^1} d(u^*\theta) - d(f \circ u) \wedge dt.
$$
Observe that
$$
d((f \circ u) dt) = d(f \circ u) \wedge dt.
$$
It follows from Stokes' theorem that
\begin{align*}
0 \leq \int_{\R \times S^1} d(u^*\theta) - d(f \circ u) \wedge dt &= \int_{\R \times S^1} d(u^*\theta) - d((f \circ u) dt) \\
&= \int_{S^1} x_+^* \theta - f(x_+(t)) dt - \left(\int_{S^1} x_-^* \theta - f(x_-(t)) dt \right)\\  
&= -\mathcal{F}(x_+) + \mathcal{F}(x_-)
\end{align*}
which completes the proof.
\end{proof}

The action increasing property of $\mathcal{F}$ allows us to define the filtered cochain complex $\CF^*_{>a}(H)$, and hence the \emph{filtered Hamiltonian Floer cohomology} $\HF^*_{>a}(H)$, of a transfer-admissible Hamiltonian $H$, so that it is generated by 1-orbits $x$ over the Novikov field $\Lda_{V}$ whose action $\mathcal{F}(x) > a$. The compatibility of the filtration with continuations maps also follows from a similar computation. The resulting \emph{filtered symplectic cohomology} is denoted by $\SH^*_{>a}(V; \Lda_V)$. The filtered symplectic cohomology corresponding to the quotient complex
\[
\CF^*_{\leq a}(H) : = \CF^*(H) / \CF^*_{>a}(H)
\]
is denoted by $\SH^*_{\leq a} (V; \Lda_V)$. 



\subsection{Ring structure and filtration} To show that the product is well-defined on $\SH^*_{> a}(V)$, we observe that the $\mathcal{F}$-filtration satisfies the following inequality.

\begin{proposition}\label{prop: pantineq}
    Let $u: \mathcal{\mathcal{S}} \rightarrow \widehat V$ be a Floer solution of the equation \eqref{eq: Floereq} converging to $x_1$, $x_2$ at the positive punctures and to $x_3$ at the negative puncture. Then we have
    \begin{equation}\label{eq: actioneqforpants}
       \mathcal{F}(x_1) + \mathcal{F}(x_2) \leq \mathcal{F}(x_3).
    \end{equation}
\end{proposition}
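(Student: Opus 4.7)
The strategy is to mimic the energy computation from the previous proposition, with the $1$-form $\beta$ on $\mathcal{S}$ taking the role played by $dt$ on the cylinder. The core pointwise ingredient is that the $2$-form $u^*\eta - u^*(\iota_{X_H}\eta)\wedge \beta$ is non-negative on $\mathcal{S}$. In local conformal coordinates $(s,t)$ with $\beta = a\,ds + b\,dt$, the Floer equation \eqref{eq: Floereq} reads $du(\partial_t) - bX_H = J(du(\partial_s) - aX_H)$, and a direct expansion shows that the above $2$-form evaluated on $(\partial_s, \partial_t)$ equals $\eta(V, JV)$ with $V = du(\partial_s) - aX_H$. This is $\geq 0$ by the same pointwise estimate as in \cite[Lemma 6.1]{McRi23}, exploiting the Liouville-type character of $\theta$ and the cylindricity of $J$ in each region where $\eta$ is nonzero.

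Next, using the identity $\iota_{X_H}\eta = df$ established in the proof of the preceding proposition, one rewrites
\[
u^*(\iota_{X_H}\eta)\wedge\beta \;=\; d(f\circ u)\wedge\beta \;=\; d\bigl((f\circ u)\beta\bigr) - (f\circ u)\,d\beta.
\]
Integrating the pointwise inequality over $\mathcal{S}$, truncating by small disks around each puncture, and sending their radii to zero, Stokes' theorem together with the asymptotic $\beta = dt$ at each puncture yields
\[
0 \;\leq\; -\mathcal{F}(x_1) - \mathcal{F}(x_2) + \mathcal{F}(x_3) + \int_{\mathcal{S}}(f\circ u)\, d\beta,
\]
where the boundary contributions at the two positive punctures each produce $-\mathcal{F}(x_i)$ and the negative puncture contributes $+\mathcal{F}(x_3)$ by the outward-normal orientation convention.

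It remains to bound the curvature integral. I would first observe that $f \geq 0$ on all of $\widehat V$: since $\phi$ is monotone increasing and $H$ is non-decreasing by the convexity, concavity, and linearity conditions in Definition \ref{transadmham}, the integrand $\phi'(\tau)H'(\tau)$ in the definition of $f$ is non-negative everywhere, and $f$ vanishes on $\{r_W \leq 1\}$ where $\phi' \equiv 0$. Then I would choose the $1$-form $\beta$ on the pair-of-pants $\mathcal{S}$ so that $\beta = dt$ near each puncture and $d\beta \leq 0$ globally; such a sub-closed $\beta$ exists on a sphere with two positive and one negative puncture and is the standard TQFT choice (cf. \cite[Section 6]{Rit}). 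Combining these two facts, $(f\circ u)\,d\beta \leq 0$ pointwise, so the curvature integral is non-positive, and \eqref{eq: actioneqforpants} follows.

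The main obstacle is the pointwise positivity $\eta(V, JV) \geq 0$: it requires that in each relevant region the form $\theta$ behaves like a (rescaled) Liouville form and that $J$ is cylindrical, so that the Liouville-coordinate estimate applies uniformly across the collars $[1,A]\times\partial W$ and $[A+1+P,\infty)\times \partial V$, as well as the transitional flat region. Once this is in hand, the adaptation to the pair-of-pants is formal and the proof reduces to carefully tracking the boundary orientations in Stokes' theorem.
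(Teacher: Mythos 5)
Your proof follows the same overall strategy as the paper (energy inequality plus Stokes), but you are more careful about a term the paper glosses over. The paper asserts the identity $u^*\eta - u^*(\iota_{X_{H_{\mathcal{S}}}}\eta)\wedge\beta = d(u^*\theta - (f\circ u)\beta)$ and applies Stokes directly; expanding the right-hand side, however, produces the extra summand $-(f\circ u)\,d\beta$, exactly as you point out. Your observation that $f\geq 0$ is also correct and worth recording: $\phi'\geq 0$ since $\phi$ is monotone increasing, $H'\geq 0$ by admissibility, so $\phi' H'\geq 0$ and hence $f\geq 0$ everywhere (with $f\equiv 0$ on $\{r_W\leq 1\}$). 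The paper does not make either of these points explicit.

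Where your argument breaks is the claim that one can take $\beta$ with $\beta=dt$ near each puncture and $d\beta\leq 0$ globally. On a sphere with two positive and one negative puncture, truncating at $|s|=R$ and applying Stokes gives
\[
\int_{\mathcal{S}_R} d\beta = \int_{\partial\mathcal{S}_R}\beta = w_1 + w_2 - w_3,
\]
where $w_i$ are the weights at the respective punctures, the positive boundary circles inheriting orientation $+\partial_t$ and the negative one $-\partial_t$. With all weights equal to $1$ this integral equals $+1>0$, so $d\beta\leq 0$ is impossible. Ritter's sub-closedness criterion (\cite[Section 6]{Rit}) requires $w_3 \geq w_1 + w_2$, i.e.\ $\beta = w_3\,dt$ with $w_3\geq 2$ at the output end, not $\beta=dt$. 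The paper's phrase ``agrees with $dt$ near the punctures'' is itself an imprecision: the intended setup is the weighted one, as confirmed by the later passage where the output orbit $x_3$ is taken to be a $1$-orbit of $2H$, which corresponds to $\beta = 2\,dt$ at the negative end. Once you correct this to the weighted asymptotics, one can choose $\beta$ closed (take $w_3 = w_1 + w_2$) so that $(f\circ u)\,d\beta\equiv 0$, or sub-closed so that $(f\circ u)\,d\beta\leq 0$ by your $f\geq 0$ observation, and the rest of your argument goes through.

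So the proposal's route is the paper's route but more honest about the curvature term; the only genuine flaw is the unweighted sub-closedness claim, which is fixed by the standard weight convention that the paper itself later invokes.
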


\begin{proof}
As in the case of Floer cylinders, we have that
\[
\eta(\p_s u - \beta_s X_{H_{\mathcal{S}}}, J(\p_s u - \beta_s X_{H_{\mathcal{S}}})) \geq 0
\]
where $(s, t)$ denotes a local coordinate of the Riemann surface $\mathcal{S}$ and $\eta = d \theta$. Due to the Floer equation \eqref{eq: Floereq}, this implies that
\[
\int_{\mathcal{S}} u^* \eta  - u^*(\iota_{X_{H_{\mathcal{S}}}} \eta) \wedge \beta \geq 0.
\]
By Stokes' theorem together with the fact that $ u^* \eta  - u^*(\iota_{X_{H_{\mathcal{S}}}} \eta) \wedge \beta = d(u^* \theta - (f \circ u) \beta)$, it follows that
\begin{align*}
    0 &\leq \int_{\partial \mathcal{S}} u^* \theta - (f \circ u) \beta \\
    &= \int_{\partial^+ \mathcal{S}} u^* \theta - (f \circ u) \beta - \left(\int_{\partial^- \mathcal{S}} u^* \theta - (f \circ u) \beta \right) \\
    &= \int_S^1 x_1^*\theta - \int_{S^1} f(x_1(t)) dt + \int_S^1 x_2^*\theta - \int_{S^1} f(x_2(t)) dt + \left(- \int_S^1 x_3^*\theta + \int_{S^1} f(x_3(t)) dt \right) \\
    &= -\mathcal{F}(x_1) - \mathcal{F}(x_2) + \mathcal{F}(x_3)
\end{align*}
which completes the proof.
\end{proof}

\begin{remark}
  The inequality \eqref{eq: actioneqforpants} implies that if the action values of the inputs are bounded from below by a non-negative number, that is, $a < \mathcal{F}(x_1)$ and $a < \mathcal{F}(x_2)$ for some $a \geq 0$, then so is the action value of the output; $a < \mathcal{F}(x_3)$. Therefore, it follows that the filtered symplectic cohomology $\SH^*_{> a}(V)$ admits the induced ring structure for $a \geq 0$.  
\end{remark}


\subsection{Filtered symplectic cohomology of non-positive actions} In this section, we show that the filtered symplectic cohomology $\SH^*_{\leq 0}(V; \Lda_V)$ can be canonically identified with the symplectic cohomology $\SH^*(W; \Lda_W)$ of the subdomain $W$.

\subsubsection{A cofinal family of transfer-admissible Hamiltonians}\label{sec: cofHam} Following \cite{Mclean_extension}, we choose a certain cofinal family of transfer-admissible Hamiltonians. For each $n \in \N$, we choose the constant numbers in Definition \ref{transadmham} as follows: 
\begin{itemize}
\item $0 < \kappa = \kappa(n) \rightarrow \infty$ as $n \rightarrow \infty$, and $\kappa \not \in \Spec(\p W, \alpha_W)$;
\item $\mu = \mu(n) : = \text{dist}(\kappa(n), \Spec(\p W, \alpha_W))$, and we assume $\mu$ is arbitrary small;
\item $A = A(n) : = 6\kappa(n)/\mu(n)$, and we assume $A > \kappa > 1$;
\item $0< \epsilon = \epsilon(n) \rightarrow 0$ as $n \rightarrow \infty$.
\end{itemize}
Let $H = H(n): \widehat V \rightarrow \R$ be a transfer-admissible Hamiltonian with the constants chosen as above. From its definition, Hamiltonian 1-periodic orbits, after $C^2$-small time-dependent perturbation, can only appear in the following regions.
\begin{enumerate}
\item[(I)] $r_W < 1$ and $H$ is constant;
\item[(II)] $1 < r_W < 1+ \epsilon/\kappa$. Here, $H$ depends only on $r_W$ and $H''(r) > 0$;
\item[(III)]$A - \epsilon/\kappa < r_W < A$ and $H''(r) < 0$;
\item[(IV)] $A < r_W$, $r_V < A+ 1 + P$, and $H$ is constant;
\item[(V)] $A+ 1+ P < r_V < A + 1 + P + \epsilon/\kappa$ and $H''(r) > 0$.
\end{enumerate}
See Figure \ref{fig: tranfer_Ham}. In each region, we estimate the $\mathcal{F}$-action value as follows:

\begin{lemma}\label{lem: actionorbits}
For 1-orbits $x$ in the region \emph{(I)} and \emph{(II)}, we have $\mathcal{F}(x) \leq 0$. In the other regions, we have $\mathcal{F}(x) > 0$.
\end{lemma}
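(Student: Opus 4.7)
The plan is to compute $\mathcal{F}(x)$ explicitly in each of the five regions, using the piecewise formulas for $\theta$ and $f$ from Section \ref{sec: newfiltration} together with the fact that a $1$-orbit of a radial Hamiltonian in a symplectization stays at constant radius and traces a Reeb orbit whose period equals the slope. The key technical step is one integration by parts that collapses the two terms defining $\mathcal{F}$ into a single clean radial integral.

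Concretely, for an orbit $x$ lying in the $W$-symplectization at radius $r_0 \in [1,A]$, I would use $\theta = \phi(r_W)\alpha_W$ and $X_H = h'(r_W) R_{\alpha_W}$ to obtain $\int_{S^1} x^*\theta = \phi(r_0) h'(r_0)$ and $\int_{S^1} f(x(t))\, dt = \int_1^{r_0} \phi'(\tau) h'(\tau)\, d\tau$. An integration by parts using $\phi(1) = 0$ then yields the compact identity
\[
\mathcal{F}(x) \;=\; -\int_1^{r_0} \phi(\tau)\, h''(\tau)\, d\tau,
\]
and the analogous computation in the $V$-symplectization gives $\mathcal{F}(x) = f(A) - \int_{A+1+P}^{r_0} \phi(\tau)\, h''(\tau)\, d\tau$, with $f(A) = \int_1^A \phi'(\tau) h'(\tau)\, d\tau$.

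The region-by-region sign checks now proceed as follows. In Region (I), both $\phi$ and $f$ vanish, so $\mathcal{F}(x) = 0$. In Region (II), $\phi \geq 0$ and $h'' \geq 0$ on the domain of integration, so $\mathcal{F}(x) \leq 0$. In Region (IV), the orbit is constant, $\int_{S^1} x^*\theta = 0$, and $f \equiv f(A)$; the linear piece of $h$ contributes $\kappa\bigl(\phi(A-\epsilon/\kappa) - \phi(1+\epsilon/\kappa)\bigr) > 0$ to $f(A)$, so $\mathcal{F}(x) = f(A) > 0$. In Region (III), the integral splits into a positive piece $I_1 := \int_1^{1+\epsilon/\kappa} \phi h''\, d\tau \leq \phi(1+\epsilon/\kappa)\cdot\kappa$ and a negative piece $I_2 := \int_{A-\epsilon/\kappa}^{r_0} \phi h''\, d\tau \approx \phi(A)\bigl(h'(r_0)-\kappa\bigr)$; since $\kappa \notin \Spec(\p W,\alpha_W)$ forces $h'(r_0) < \kappa$ strictly, and $\phi$ is arranged to be negligible on $[1,1+\epsilon/\kappa]$ but of order $\phi(A) > 0$ past $r_W = A-\epsilon/\kappa$, one finds $|I_2| > I_1$, hence $\mathcal{F}(x) = -(I_1+I_2) > 0$. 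In Region (V), the radial integral is bounded above by $\phi(r_V) h'(r_V) \lesssim \phi(A)\cdot\kappa/2$, while $f(A)$ is of order $\kappa\phi(A)$ from the linear piece of $h$, so the factor-$1/2$ gap coming from the slope $\kappa/2$ at infinity guarantees $\mathcal{F}(x) > 0$.

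The main obstacle I anticipate is controlling the mixed-sign integral in Region (III): the convex piece near $r_W = 1$ and the concave piece near $r_W = A$ contribute with opposite signs, and positivity of $\mathcal{F}(x)$ depends on quantitative control of the cutoff $\phi$. This is exactly what motivates the careful list of properties demanded of $\phi$ in Section \ref{sec: newfiltration} --- most notably $\phi \equiv 0$ for $r_W \leq 1$ and $\phi' \equiv 1$ near $r_W = A$ --- which concentrate essentially all of $\phi$'s growth past the convex region. A parallel but milder issue in Region (V) is resolved by the cofinal family's choice of slope $\kappa/2$ at infinity, strictly less than the $W$-slope $\kappa$, which is what ultimately allows $f(A)$ to beat the negative contribution.
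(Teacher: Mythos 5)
Your overall strategy is the same as the paper's: pull the two terms of $\mathcal{F}$ together by an integration by parts to get $\mathcal{F}(x)=-\int_1^{r_x}\phi(\tau)H''(\tau)\,d\tau$ (the paper's equation \eqref{eq: validfrom2to5}), then argue region by region. Your handling of Regions (I), (II) and (IV) is exactly the paper's, and your Region (V) argument is a minor rearrangement of the paper's (you bound $\int\phi H''$ by $\phi(r_V)H'(r_V)$ instead of passing through $\tau H''$, but both land on the same comparison between the output slope $\kappa/2$ and the size of $f(A)\approx B$).

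The genuine gap is in Region (III). Your claim that $|I_2|>I_1$ is not actually established by what you write. You bound $I_1\leq\phi(1+\epsilon/\kappa)\kappa$ and estimate $I_2\approx\phi(A)\bigl(h'(r_0)-\kappa\bigr)$, and then appeal only to the \emph{qualitative} fact that $h'(r_0)<\kappa$ strictly (from $\kappa\notin\Spec(\p W,\alpha_W)$) together with the vague statement that $\phi$ is ``negligible'' near $1$ and ``of order $\phi(A)$'' near $A$. But $\kappa-h'(r_0)$ is only bounded below by $\mu=\mathrm{dist}(\kappa,\Spec(\p W,\alpha_W))$, and in the cofinal family $\mu$ is taken arbitrarily small; so $|I_2|\approx\phi(A)\cdot\mu$ could a priori shrink to zero. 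What actually saves the estimate is the deliberate scaling $A=A(n)=6\kappa(n)/\mu(n)$ built into the cofinal family in Section~\ref{sec: cofHam}: with $\phi(A)\approx A$ this gives $|I_2|\gtrsim A\mu=6\kappa$, which is large and independent of how small $\mu$ is, while $I_1\lesssim\epsilon\to 0$. The paper's proof makes this explicit by integrating by parts once more and landing on the quantity $(\kappa-\mu)A-B\approx\kappa-\mu A=-5\kappa<0$ (equation \eqref{eq: case3}). You correctly identified Region (III) as the crux and flagged the right properties of $\phi$, but without invoking $A=6\kappa/\mu$ and $h'(r_0)\leq\kappa-\mu$ the comparison $|I_2|>I_1$ is unjustified, and the sign of $\mathcal{F}(x)$ in Region (III) does not follow.
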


\begin{proof}
We basically mimic the proof in \cite[Lemma 10.2]{Mclean_extension} with additional care for the new action filtration $\mathcal{F}$.

\textbf{Case 1:} 1-orbits $x$ in the region (I) are constant orbits, and by the definition of $\theta$ and $f$, it is straightforward to see that $\mathcal{F}(x) = 0$.

\textbf{Case 2:} Let $x$ be a 1-orbit in the region (II). Then $x$ lies in a hypersurface $\{r_x\} \times \p W \subset \widehat W$ for some $r_x \in (1, 1+\epsilon/\kappa)$. Note that 
$$
\mathcal{F}(x) =  -H'(r_x) \phi(r_x) + \int_{0}^{r_x} H'(\tau)\phi'(\tau) d\tau.
$$
By integration by parts and the fact that $\phi(1) =0$, we find 
$$
H'(r_x)\phi(r_x) = H'(r_x) \phi(r_x) - H'(1)\phi(1) = \int_{1}^{r_x} (H'(\tau) \phi(\tau))' d\tau,
$$
and since $\phi'(r_W) = 0$ for $r_W < 1$, we have
$$
\int_{0}^{r_x} H'(\tau)\phi'(\tau) d\tau = \int_{1}^{r_x} H'(\tau)\phi'(\tau) d\tau.
$$
It follows that 
\begin{equation}\label{eq: validfrom2to5}
\mathcal{F}(x) = -\int_{1}^{r_x}  (H'(\tau) \phi(\tau))'d\tau + \int_{1}^{r_x} H'(\tau)\phi'(\tau) d \tau = - \int_1^{r_x} H''(\tau) \phi(\tau) d\tau.
\end{equation}
Since $\phi \geq 0$ by definition and $H''(\tau) \geq 0$ for $1 \leq \tau \leq r_x$ where $r_x \in (1, 1+ \epsilon/\kappa)$, we conclude that $\mathcal{F}(x) \leq 0$. 

\textbf{Case 3:} Let $x$ be a 1-orbit in the region (III). Then $r_x \in (A -\epsilon/\kappa, A)$. Note that the equation \eqref{eq: validfrom2to5} still works in this region for the same reason. Observe further that
$$
\mathcal{F}(x) =  -\int_1^{r_x} H''(\tau) \phi(\tau) d\tau = - \int_{1}^{r_x} \tau H''(\tau) d\tau.
$$
Indeed, the second equality holds since $\tau = \phi(\tau)$ for $r_x \in (A -\epsilon/\kappa, A)$ by the definition of $\phi$. Now, integration by parts yields
$$
\int_{1}^{r_x} \tau H''(\tau) d\tau = \int_1^{r_x} (H'(\tau) \tau)' d\tau - \int_1^{r_x} H'(\tau) d\tau = H'(r_x)r_x - H'(1) - H(r_x) + H(1).
$$
Note that $H'(1)\approx 0$ and $H(1) \approx 0$, meaning that they are arbitrary close to $0$, and $H(r_x) \approx B$ where $B$ denotes the constant value of $H$ in the region $\{r_W \geq A\} \cap \{r_V \leq A + 1+ P\}$; see Figure \ref{fig: tranfer_Ham}. Moreover, note that $H'(r_x) r_x \leq (\kappa - \mu) A$. Indeed, since $H'(r_x) \in \Spec(\p W, \alpha_W)$ and $H'(r_x) < \kappa$, it follows that $H'(r_x) < \kappa - \mu$.
We conclude that
\begin{equation} \label{eq: case3}
  H'(r_x)r_x - H'(1) - H(r_x) + H(1) \leq (\kappa-\mu)A - B \approx (\kappa-\mu)A -\kappa(A-1) = \kappa - \mu A < 0  
\end{equation}
by the choices of the constants, and hence $\mathcal{F}(x) > 0$.

\textbf{Case 4:} Let $x$ be a 1-orbit in the region (IV). Then $x$ is a constant orbit, and by the definition of $f$, we directly see that $f(x(t))$ is a constant large enough to have that $\mathcal{F}(x) = -\int_{S^1} x^*\theta + \int_{S^1}f(x(t))dt > 0$.

\textbf{Case 5:} Let $x$ be a 1-orbit in the region (V). Then $r_x \in (A + 1+ P, A+ 1+ P+ \epsilon/\kappa)$. In this case, similar computations to the equation \eqref{eq: validfrom2to5}, together with the fact that $H'(r_V) \approx 0$ for $r_V \leq A+1+P$, yield
\begin{align*}
\mathcal{F}(x) &\approx -\int_{A+1+P}^{r_x}  (H'(\tau) \phi(\tau))'d\tau + \int_{A+1+P}^{r_x} H'(\tau)\phi'(\tau) d \tau + f(A) \\ &= -\int_{A+1+P}^{r_x} H''(\tau) \phi(\tau) d\tau + f(A).   
\end{align*}
Moreover, since $\phi(\tau) \leq \tau$, we estimate the last term as
\begin{align*}
-\int_{A+1+P}^{r_x} H''(\tau) \phi(\tau) d\tau + f(A) &\geq -\int_{A+1+P}^{r_x} \tau H''(\tau) d\tau + f(A) 
\\ &= -\int_{A+1+P}^{r_x} (H'(\tau)\tau)' d\tau + \int_{A+1+P}^{r_x} H'(\tau)d\tau + f(A) \\
&> -\frac{1}{2}\kappa(A + 1+ P) + f(A).
\end{align*}
The last inequality follows from the facts that $H'(r_x) < \frac{1}{2}\kappa$, $H'(A+1+P) \approx 0$, $H(r_x) \approx B$, and $H(A+1+P) \approx B$. Now, it is enough to show that $f(A) \geq B$; indeed, this would imply from the above that
\begin{equation}\label{eq: case5}
 \mathcal{F}(x) > -\frac{1}{2}\kappa(A + 1+ P) + f(A) \geq -\frac{1}{2}\kappa(A + 1+ P) +B \geq 0   
\end{equation}
where the last inequality holds since $-\frac{1}{2}\kappa(A + 1+ P) +B \rightarrow \infty$ due to the definition of the cofinal family. 

Finally, proving $f(A) \geq B$, similar computations as above show that
\begin{align*}
f(A) &= \int_{0}^A \phi'(\tau)H'(\tau) d\tau = \int_1^{A} \phi'(\tau)H'(\tau) d \tau = \int_1^A (\phi(\tau) H'(\tau))' d\tau - \int_1^A \phi(\tau) H''(\tau) d \tau \\
&=\phi(A)H'(A) - \phi(1)H'(1) - \int_1^A \phi(\tau) H''(\tau) d \tau = - \int_1^A \phi(\tau) H''(\tau) d \tau \\
&\geq - \int_1^A \tau H''(\tau) d\tau = -\left( \int_1^A (\tau H'(\tau))' d \tau - \int_1^A H'(\tau) d \tau   \right) \\
&= -\left( A H'(A) - H'(1) - H(A) + H(1) \right) \approx B.
\end{align*}
Here, we used for the inequality the fact that $\phi (\tau) \leq \tau$ in the region where $H''(\tau) \not = 0$. This completes the proof.
\end{proof}

\begin{theorem}\label{thm: nonnegandW}
Let $W$ be a complement-exact subdomain in a convex symplectic domain $V$ such that $\pi_1(\p W) = 0$ and $c_1(V)|_{\pi_2(V)} = 0$. Then
\begin{enumerate}
\item The inclusion $i: W \rightarrow V$ induces an isomorphism between the Novikov fields $\Lda_W$ and $\Lda_V$.
\item Under the above identification $\Lda_V \cong \Lda_W$, we have a canonical vector space isomorphism 
$$
\SH^*_{\leq 0}(V; \Lda_V) \cong \SH^*(W;\Lda_W).
$$
\end{enumerate}
\end{theorem}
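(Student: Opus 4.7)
\emph{Part (1).} Each Novikov field is determined by the subgroup $\ow(\pi_2(-)) \subset \R$, and $i_*$ gives the tautological inclusion $\ow(\pi_2(W)) \subset \ow(\pi_2(V))$; I establish the reverse by a sphere-surgery argument. Given $s : S^2 \to V$, perturb it to be transverse to $\p W$, so that $s^{-1}(\p W)$ is a disjoint union of embedded circles in $S^2$, and induct on the number of circles. Pick an innermost circle $C$ bounding a disk $D \subset S^2$. If $s(\mathrm{int}\, D) \subset V \setminus W$, then $\pi_1(\p W) = 0$ lets $s|_C$ bound a disk $\tilde s : D \to \p W$, and I replace $s|_D$ by $\tilde s$. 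Since $\ow = d\widehat\lda_V$ on $V \setminus W$ with $\widehat\lda_V|_{\p W} = \alpha_W$, Stokes' theorem gives
\[
\int_D s^*\ow \;=\; \int_C s^*\widehat\lda_V \;=\; \int_C \tilde s^*\widehat\lda_V \;=\; \int_D \tilde s^*\ow,
\]
so the $\ow$-area is unchanged; if instead $s(\mathrm{int}\, D) \subset W$, I simply recurse on the remaining circles. Iterating produces a representative in $W$ of the same $\ow$-area, establishing $\ow(\pi_2(V)) = \ow(\pi_2(W))$ and hence $\Lda_V \cong \Lda_W$.

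\emph{Part (2), generators.} Working with the cofinal family $\{H = H(n)\}$ of Section \ref{sec: cofHam}, I define an admissible Hamiltonian $H^W$ on $\widehat W$ by keeping $H$ on $\{r_W \leq 1 + \epsilon/\kappa\}$ and extending linearly with slope $\kappa$ thereafter. By Lemma \ref{lem: actionorbits}, the 1-orbits of $H$ with $\mathcal{F}(x) \leq 0$ are precisely those in regions (I) and (II); these are exactly the 1-orbits of $H^W$, and contractibility of region-(II) orbits in $\widehat W$ is guaranteed by $\pi_1(\p W) = 0$. The embedding $\widehat W \hookrightarrow \widehat V$ pushes capping disks forward, and Part (1) ensures that $(x, v) \mapsto (x, i \circ v)$ descends to a bijection on $\widetilde{\mathcal{P}}_k$ that is linear over the identification $\Lda_W \cong \Lda_V$. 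This yields a grading-preserving identification of generators $\CF^*(H^W; \Lda_W) \cong \CF^*_{\leq 0}(H; \Lda_V)$.

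\emph{Part (2), differentials and passage to the limit.} To upgrade to a chain isomorphism, I match Floer differentials by a confinement argument. Choose an admissible $J$ on $\widehat V$ that is cylindrical in $r_W$ on $\{r_W \geq 1 + \epsilon/\kappa\}$ and agrees there with a cylindrical $J^W$ on $\widehat W$. Since $H$ has constant slope $\kappa \notin \Spec(\p W, \alpha_W)$ on $\{1 + \epsilon/\kappa \leq r_W \leq A - \epsilon/\kappa\}$, the standard integrated maximum principle applied to $r_W$ forbids Floer trajectories between orbits in regions (I)--(II) from entering $\{r_W > 1 + \epsilon/\kappa\}$, where $H$ and $H^W$ coincide. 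Hence the relevant moduli spaces for $(H, J)$ and $(H^W, J^W)$ are identical, giving $\CF^*(H^W) \cong \CF^*_{\leq 0}(H)$ at the chain level. The parallel $s$-dependent argument yields compatibility with continuation maps, so passing to the direct limit as $\kappa \to \infty$ produces $\SH^*(W; \Lda_W) \cong \SH^*_{\leq 0}(V; \Lda_V)$.

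The principal obstacle is this confinement step: the time-dependent perturbations of $H$ and $J$ required for transversality must be chosen to preserve the cylindrical structure near $\{r_W = 1 + \epsilon/\kappa\}$ so that the maximum principle genuinely applies, and the capping-disk bijection of Part (1) must be shown to be respected by the continuation homotopies, which requires tracking spherical classes picked up by trajectories in the non-exact ambient $V$.
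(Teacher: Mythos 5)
Your proof follows essentially the same route as the paper's: for Part (1), you use the same surgery-along-$\p W$ argument exploiting $\pi_1(\p W)=0$ and exactness of $\ow$ on the complement (the paper does this by removing the parts of $\gamma$ outside $W$ and capping the boundary circles with disks in $\p W$, rather than by induction on innermost circles, but it is the same computation); for Part (2), you use the same cofinal family, the same identification of non-positive-action orbits with the orbits of the truncated Hamiltonian $H_W$ via Lemma \ref{lem: actionorbits}, the same maximum-principle confinement of Floer cylinders to $\widehat W$, and the same passage to the direct limit. Your additional remarks — that $\pi_1(\p W)=0$ is needed for region-(II) orbits to be contractible in $\widehat W$, that $J$ must be chosen cylindrical in $r_W$ where the confinement is applied, and that transversality perturbations and continuation homotopies must respect this structure — are correct observations on technical points the paper treats implicitly, but they do not change the architecture of the argument.
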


\begin{proof}
The first assertion essentially follows from the topological assumption $\pi_1(\p W) = 0$ and the exactness of $\ow$ in the complement $\widehat{V} \setminus W$. It is enough to show that the induced map $i_*: \Gamma_W \rightarrow \Gamma_V$, via $i_*: \pi_2(W) \rightarrow \pi_2(V)$, is an isomorphism of groups. Since injectivity is obvious by definition, we prove the surjectivity. More precisely, we claim that for a class $[\gamma] \in \Gamma_V$, there is a class $\gamma_W \in \pi_2(W)$ such that $[\gamma_W] = [\gamma] \in \Gamma_V$. If $\gamma(S^2) \subset W$, we put $\gamma_W := \gamma$. Suppose $\gamma(S^2) \not \subset W$. We may assume that the intersection $\gamma(S^2) \cap \p W$ consists of circles $\delta_1, \dots, \delta_{k}$ contained in $\p W$. Since $\pi_1(\p W) = 0$, each circle $\delta_i$ admits a capping disk $\nu_i$ in $\p W$. Now we consider the spheres in $W$ obtained from $\gamma$ by removing the parts of $\gamma$ in the complement $V \setminus W$ and attaching the capping disks $\nu_i$ along the boundary circles $\delta_i$. Those spheres form a class in $\pi_2(W) = i_*\pi_2(W) \subset \pi_2(V)$, and we put $\gamma_W$ as this class. Then, since the symplectic form $\ow$ is exact in the complement $\widehat{V} \setminus W$, we see that $\ow(\gamma_W) = \ow(\gamma)$. In other words, $[\gamma_W] = [\gamma] \in \Gamma_V$ as we claimed.

For the second assertion, let $H: \widehat V \rightarrow \R$ be a transfer-admissible Hamiltonian. We define an associated admissible Hamiltonian $H_W: \widehat W \rightarrow \R$ by $H_W := H|_{\widehat W}$ for $r_W \leq 1 + \epsilon/\kappa$ and $H_W$ to be linear for $r_W \geq 1 + \epsilon/\kappa$ with slope $\kappa$; see Figure \ref{fig: tranfer_Ham}. We accordingly define an associated admissible almost complex structure $J_W$ on $\widehat W$ for each admissible almost complex structure $J$ on $\widehat V$. 

Now we identify the chain group $\CF^*_{\leq 0}(H)$ with $\CF^*(H_W)$ as follows. By Lemma \ref{lem: actionorbits}, we know that $1$-periodic orbits $x$ of $H$ of non-positive $\mathcal{F}$-action are exactly those $1$-periodic orbits of $H_W$. Moreover, in the same way as the proof of the first assertion identifying $\Lda_V$ with $\Lda_W$, we canonically obtain, for each capping disk $v$ of $x$, a capping disk $v_W$ of $x$ contained in $W$ such that $[x, v] = [x, v_W] \in \widetilde{\mathcal{L}_0 V}$. This yields an identification $\CF^*_{\leq 0}(H) = \CF^*(H_W)$. Moreover, the respective differential maps $\p$ and $\p_W$ can also be identified; the Floer trajectories $u: \R \times S^1 \rightarrow \widehat V$ between orbits in regions (I) and (II) have to be entirely contained in $\widehat W$ by the maximum principle \cite[Lemma 7.2]{AboSei}. Passing to the direct limits increasing the slope $\kappa$ on both sides, we conclude that $\SH^*_{\leq 0}(V; \Lda_V) \cong \SH^*(W; \Lda_W)$.
\end{proof}

Note that the examples from negative line bundles in Example \ref{ex: neglinebdl2} satisfy all the assumptions of Theorem \ref{thm: nonnegandW}.

\subsubsection{Ring structure on $\SH_{\leq 0}^*(V)$} \label{sec: ring on nonneg} Now we show that the product of the full symplectic cohomology $\SH^*(V)$ descends to the non-positive part $\SH_{\leq 0}^*(V)$; note that this does not directly follow from the inequality \eqref{eq: actioneqforpants}. The non-positive part $\SH_{\leq 0}^*(V)$ is defined by modding out the generators of $\SH^*(V)$ with positive actions at the chain level. It is therefore sufficient to show the following.

\begin{proposition}
    Let $u: \mathcal{S} \rightarrow \widehat V$ be a Floer solution of \eqref{eq: Floereq} which converges to $x_1$, $x_2$ at positive punctures and to $x_3$ at the negative puncture. If $\mathcal{F}(x_1) > 0$ or $\mathcal{F}(x_2) > 0$, then $\mathcal{F}(x_3) > 0$. 
\end{proposition}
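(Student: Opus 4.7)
The plan is to combine the monotonicity inequality from Proposition \ref{prop: pantineq} with a quantitative action gap extracted from Lemma \ref{lem: actionorbits}. Proposition \ref{prop: pantineq} already gives $\mathcal{F}(x_1) + \mathcal{F}(x_2) \leq \mathcal{F}(x_3)$, which immediately handles the easy case in which both inputs carry positive action. The interesting case is when, without loss of generality, $\mathcal{F}(x_1) > 0$ and $\mathcal{F}(x_2) \leq 0$; here the bare inequality is not sufficient and one must show that the positive contribution from $x_1$ strictly dominates the possibly negative contribution from $x_2$.

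To produce such a gap, I would revisit the case analysis in Lemma \ref{lem: actionorbits} for the cofinal family of transfer-admissible Hamiltonians from Section \ref{sec: cofHam} and keep track of the sizes of the bounds, not just their signs. Non-positive action orbits lie in regions (I) and (II): region (I) orbits give $\mathcal{F}(x) = 0$ exactly, while for a region (II) orbit the formula $\mathcal{F}(x) = -\int_1^{r_x} H''(\tau)\phi(\tau)\,d\tau$, combined with $\phi(1)=0$ and $r_x \in (1, 1+\epsilon/\kappa)$, shows $|\mathcal{F}(x)| \leq C_1 \epsilon$ for a constant $C_1$ independent of the cofinal parameters (one uses $\phi(\tau) = O(\epsilon/\kappa)$ on the narrow interval together with $\int H''(\tau)\,d\tau \leq \kappa$). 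Positive-action orbits lie in regions (III), (IV), or (V); for region (III) the calculation in Case 3 of Lemma \ref{lem: actionorbits} together with the choice $A = 6\kappa/\mu$ yields $\mathcal{F}(x) \geq 5\kappa$ up to arbitrarily small errors, while for regions (IV) and (V) the lower bounds from equation \eqref{eq: case5} are even larger and tend to $+\infty$ along the family. Consequently, once cofinal parameters are chosen so that $\epsilon$ is sufficiently small relative to $\kappa$, every positive-action 1-orbit satisfies $\mathcal{F}(x) \geq C_2 \kappa$ for some fixed $C_2 > 0$, while every non-positive-action orbit satisfies $\mathcal{F}(x) \geq -C_1 \epsilon$.

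Putting these pieces together, in the remaining case $\mathcal{F}(x_1) > 0 \geq \mathcal{F}(x_2)$ the inequality from Proposition \ref{prop: pantineq} combined with the gap estimates delivers
\[
\mathcal{F}(x_3) \;\geq\; \mathcal{F}(x_1) + \mathcal{F}(x_2) \;\geq\; C_2 \kappa - C_1 \epsilon \;>\; 0,
\]
which closes the argument. The main subtlety I expect is ensuring that these action-gap bounds apply uniformly when the pair-of-pants involves three potentially distinct transfer-admissible Hamiltonians $H_1, H_2, H_3$, with $H_3$ dominating what is needed for the product at infinity. This is handled by drawing all three Hamiltonians from a single cofinal family with compatible parameters $(\kappa, \mu, A, \epsilon)$, so that the common region decomposition of Section \ref{sec: cofHam} and the same quantitative gap govern the 1-orbits of each $H_i$ simultaneously as well as the output orbit $x_3$.
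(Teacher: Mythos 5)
Your proposal follows essentially the same line as the paper: invoke the superadditivity inequality from Proposition~\ref{prop: pantineq}, reduce to the case $\mathcal{F}(x_1)>0\geq\mathcal{F}(x_2)$, and then extract from the case analysis of Lemma~\ref{lem: actionorbits} (with the cofinal parameters $A=6\kappa/\mu$, etc.) a quantitative action gap showing $\mathcal{F}(x_1)+\mathcal{F}(x_2)>0$. The paper likewise sets $H_1=H_2=H$, $H_3=2H$ and splits into regions (III), (IV), (V), obtaining lower bounds $5\kappa$, $\kappa(A-1)$, and $\frac{1}{2}\kappa(A+1)$ respectively.

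One small imprecision worth flagging: your bound $|\mathcal{F}(x_2)|\leq C_1\epsilon$ for region (II) orbits rests on $\phi(\tau)=O(\epsilon/\kappa)$ on $(1,1+\epsilon/\kappa)$, which would require a uniform bound on $\phi'$ near $r_W=1$; the paper only guarantees $\phi(\tau)\leq\tau$, giving the weaker estimate $\mathcal{F}(x_2)\geq -\kappa$, and that is what the paper actually uses. This does not break your argument, since the positive-region gap is of order $5\kappa$ or larger and dominates $\kappa$ just as well as it dominates $C_1\epsilon$; but the cleaner route is to stick with the $-\kappa$ bound that falls directly out of $\phi(\tau)\leq\tau$ rather than inventing a sharper estimate that needs an extra hypothesis on $\phi'$.
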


\begin{proof}
By the inequality \eqref{eq: actioneqforpants}, if the two inputs $x_1, x_2$ both have positive actions, then so does the output $x_3$. Now, without loss of generality, suppose that $\mathcal{F}(x_1) > 0$ and $\mathcal{F}(x_2) \leq 0$. Since we are interested in what happens after taking the direct limit over a cofinal family of transfer-admissible Hamiltonians, we may assume for simplicity that $x_1$ and $x_2$ are 1-orbits of a transfer-admissible Hamiltonian $H$ whose slope $\kappa$ is sufficiently large, and $x_3$ is a 1-orbit of $2 H$. Then in view of Lemma \ref{lem: actionorbits}, $x_2$ appears in the region (I) or (II), and it directly follows from the definition of $\mathcal{F}$ that $\mathcal{F}(x_2) \geq -\kappa$.

On the other hand, Lemma \ref{lem: actionorbits} shows that $x_1$ lies in the region (III), (IV), or (V). Now we estimate the sum $\mathcal{F}(x_1) + \mathcal{F}(x_2)$, for each case, to show that 
\[
\mathcal{F}(x_1) + \mathcal{F}(x_2) > 0
\]
which implies that $\mathcal{F}(x_3) \geq 0$ by \eqref{eq: actioneqforpants}. When $x_1$ lies in the region (III), the inequality \eqref{eq: case3} and the choice of the constant $A$ show that $\mathcal{F}(x_1) > -\kappa + \mu A = 5 \kappa$. It follows that
\[
\mathcal{F}(x_1) + \mathcal{F}(x_2) >  5 \kappa - \kappa > 0.
\] 
When $x_1$ is in the region (IV), as we have observed in the proof of Lemma \ref{lem: actionorbits}, the action $\mathcal{F}(x_1)$ is arbitrarily close to $\kappa(A - 1)$. Therefore, we see that
\[
\mathcal{F}(x_1) + \mathcal{F}(x_2) \geq \kappa(A - 1) - \kappa = \kappa(A -2) > 0.
\]
Lastly, when $x_1$ appears in the region (V), from the inequality \eqref{eq: case5}, 
\[
\mathcal{F}(x_1) \geq - \frac{1}{2}\kappa(A+ 1+ P) + \kappa(A  -1) > \frac{1}{2} \kappa(A +1). 
\]
It follows that
\[
\mathcal{F}(x_1) + \mathcal{F}(x_2) \geq \frac{1}{2} \kappa(A +1) - \kappa =  \frac{1}{2}\kappa(A-1) > 0.
\]
This completes the proof.
\end{proof}

As a corollary, we see that, with respect to the identification $\Lda := \Lda_V = \Lda_W$ as in Theorem \ref{thm: nonnegandW}, the non-positive part $\SH^*_{ \leq 0}(V)$ is canonically isomorphic to $\SH^*(W)$ as $\Lda$-algebras.
\begin{corollary}\label{cor: algebraisom}
    Under the assumptions of Theorem \ref{thm: nonnegandW}, we have a canonical $\Lda$-algebra isomorphism
    \[
    \SH^*_{ \leq 0}(V) \cong \SH^*(W).
    \]
\end{corollary}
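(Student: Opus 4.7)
My plan is to promote the vector space isomorphism of Theorem \ref{thm: nonnegandW} to an algebra isomorphism by checking compatibility of the pair-of-pants products at the chain level and then passing to the direct limit. The preceding proposition already guarantees that the product on $\SH^*(V)$ descends to the quotient $\SH^*_{\leq 0}(V)$, so it suffices to verify that this descended product matches the pair-of-pants product on $\SH^*(W)$ under the identification of Theorem \ref{thm: nonnegandW}.

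First I would fix, as in the proof of Theorem \ref{thm: nonnegandW}, a cofinal family of transfer-admissible Hamiltonians $H_n$ on $\widehat V$ together with their associated admissible Hamiltonians $H_{W,n}$ on $\widehat W$. Given three such Hamiltonians $H_1, H_2, H_3$ (and the corresponding $H_{W,1}, H_{W,2}, H_{W,3}$), I would choose an $\mathcal{S}$-parametrized interpolation $H_{\mathcal{S}}$ and admissible almost complex structure $J_{\mathcal{S}}$ on $\widehat V$ whose restrictions to $\widehat W$ agree with the corresponding data $H_{W,\mathcal{S}}$ and $J_{W,\mathcal{S}}$ used to define the product on $\CF^*(H_W)$. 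With this setup the goal is to establish a bijection between the Floer solutions $u: \mathcal{S} \rightarrow \widehat V$ of \eqref{eq: Floereq} with all three asymptotic orbits of non-positive $\mathcal{F}$-action and the Floer solutions $u_W: \mathcal{S} \rightarrow \widehat W$ counted by the pair-of-pants product on $\CF^*(H_W)$.

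The key technical step is a maximum principle ensuring that any Floer solution $u: \mathcal{S} \rightarrow \widehat V$ of the above type is in fact contained in $\widehat W$. By Lemma \ref{lem: actionorbits} the three asymptotic orbits lie in regions (I) or (II), and since $H_{\mathcal{S}}$ is linear with positive slope in the $r_W$-coordinate on the symplectization $[1, \infty) \times \p W$ while $J_{\mathcal{S}}$ is of contact type there, the no-escape maximum principle of \cite[Lemma 7.2]{AboSei}, already invoked in the proof of Theorem \ref{thm: nonnegandW} for Floer cylinders, extends to the pair-of-pants domain provided the one-form $\beta \in \Omega^1(\mathcal{S})$ is arranged to satisfy $d\beta \leq 0$. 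I expect this extension to be the main obstacle: one must take care because the slopes of the input Hamiltonians $H_1, H_2$ and the output Hamiltonian $H_3$ differ, but the standard trick of prescribing the output slope to be the sum of the input slopes handles this without disturbing the transfer-admissible structure.

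Once the bijection of moduli spaces is in place, the two pair-of-pants products agree at the chain level, and passing to the direct limit as $\kappa \rightarrow \infty$ yields the desired $\Lda$-algebra isomorphism $\SH^*_{\leq 0}(V) \cong \SH^*(W)$. Unitality follows from the fact that the unit of $\SH^*(W)$ is represented by a generator in region (I), which corresponds under the chain-level identification to the unit of $\SH^*_{\leq 0}(V)$ inherited from $\SH^*(V)$.
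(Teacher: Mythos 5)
Your proposal correctly reconstructs the argument the paper leaves implicit: the preceding Proposition guarantees the product descends to $\SH^*_{\leq 0}(V)$, the chain-level identification $\CF^*_{\leq 0}(H)\cong\CF^*(H_W)$ comes from Theorem \ref{thm: nonnegandW}, and the same no-escape maximum principle \cite[Lemma 7.2]{AboSei} used there for Floer cylinders applies to pair-of-pants solutions with all asymptotics in regions (I)--(II). This is essentially the same approach the authors have in mind.
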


\subsection{Transfer maps} Transfer-admissible Hamiltonians $H: \widehat V \rightarrow \R$ form a cofinal family of admissible Hamiltonians. The symplectic cohomology $\SH^*(V; \Lda_V)$ is the direct limit of Hamiltonian Floer cohomology $\HF^*(H)$ along the slope $\kappa$ of transfer-admissible Hamiltonians $H$. Consequently, the action functional $\mathcal{F}$ defines a natural map into the quotient
\begin{equation}\label{eq: fulltononpositive}
  \SH^*(V; \Lda_V) \rightarrow \SH_{\leq 0}^*(V; \Lda_V),  
\end{equation}
and by the discussion in Section \ref{sec: ring on nonneg}, this map preserves the ring structure. Now, the composition with the identification $\SH^*_{\leq 0}(V; \Lda_V) \cong \SH^*(W; \Lda_W)$ by Corollary \ref{cor: algebraisom} yields an algebra homomorphism 
$$
\Phi: \SH^*(V; \Lda_V) \rightarrow \SH^*(W, \Lda_W).
$$
over $\Lda_V = \Lda_W$.
\begin{proof}[Proof of Theorem \ref{thm: A}]
It only remains to explain about the commutative diagram
\[
\begin{tikzcd}
        \SH^*(V) \arrow{r}{\Phi}  & \SH^*(W)  \\
        \QH^*(V) \arrow{r}{i^*} \arrow{u}{c^*}& \QH^*(W) \arrow[swap]{u}{c^*}
\end{tikzcd}
\]
For a transfer-admissible Hamiltonian $H^0$ with the slope $\kappa$ (and hence $\frac{1}{2}\kappa$) sufficiently small, 1-periodic orbits of $H^0$ are precisely those of constant orbits corresponding to Morse critical points on $W$ and $V$ respectively. Moreover, the standard Floer theory, e.g. \cite{Fl88}, tells us that they recover the quantum cohomology $\QH^*(V)$ and $\QH^*(W)$ including the ring structure \cite[Lemma 13]{Ri14}. Now the restriction of the Hamiltonian $H^0: \widehat{V} \rightarrow \R$ to $H^0_W: \widehat{W}\rightarrow \R$, as in the proof of Theorem \ref{thm: nonnegandW}, corresponds to the natural restriction map $i^*: \QH^*(V) \rightarrow \QH^*(W)$. In other words, we have a commutative diagram:
\[
\begin{tikzcd}
        \SH^*(V) \arrow{r}  & \SH_{\leq 0}^*(V)   \\
        \QH^*(V) \arrow{r}{i^*} \arrow{u}{c^*}& \QH^*(W) \arrow[swap]{u}{c^*}
\end{tikzcd}
\]
Here, the upper horizontal map is the homomorphism in \eqref{eq: fulltononpositive}, and the canonical $c^*$-map is defined by the inclusion; see \cite[Section 5]{Rit}. Now, applying the identification $\SH^*_{\leq 0}(V) \cong \SH^*(W)$ in Corollary \ref{cor: algebraisom}, we obtain the desired diagram.
\end{proof}

\begin{remark}\label{rem: Weinattach}
    Given a convex symplectic domain $W$ with $\pi_1(\p W) = 0$ and $c_1(W)|_{\pi_2(W)} = 0$, attach the Weinstein handle $\mathcal{H}$ to $W$ along the boundary $\p W$. Denote the resulting convex domain by $V$. Note that $W$ is now a complement-exact subdomain of $V$ which meets the assumptions of Theorem \ref{thm: A}. As an analogue of the seminal result of Cieliebak \cite{Ci02}, it is likely that the transfer map $\Phi: \SH^*(V) \rightarrow \SH^*(W)$ gives rise to an algebra isomorphism when the handle $\mathcal{H}$ is subcritical. 
\end{remark}






\subsection*{Acknowledgement} The first author was supported by the National Research Foundation of Korea(NRF) grant funded by the Korea government(MSIT) (No. RS-2025-23524132, NRF-2021R1F1A1060118). The second author was supported by Japan Society of Promotion of Science KAKENHI Grant numbers JP22K13913, JP24H00182.

\bibliographystyle{abbrv}
\bibliography{mybibfile}

\end{document}